\documentclass[a4paper, 11pt]{amsart}

\usepackage{a4wide,amsmath, amsfonts, amssymb, amsthm, graphicx, enumerate}
\usepackage[colorlinks]{hyperref}
\hypersetup{linkcolor=blue, urlcolor=blue, citecolor=red}
\numberwithin{equation}{section}

\setlength{\marginparwidth}{1in}
\let\oldmarginpar\marginpar
\renewcommand\marginpar[1]{\-\oldmarginpar[\raggedleft\footnotesize #1]%
{\raggedright\footnotesize #1}}

\newcounter{ccounter}

\newcounter{ccountertwo}

\newtheorem{thm}{Theorem}[section]
\newtheorem{cor}[thm]{Corollary}
\newtheorem{lem}[thm]{Lemma}
\newtheorem{prop}[thm]{Proposition}

{\theoremstyle{definition}
\newtheorem{claim}[thm]{Claim}
\newtheorem{defn}[thm]{Definition}

\newtheorem{question}[thm]{Question}}
\newcommand{\aut}{\operatorname{Aut}}

\newcommand{\dom}{\operatorname{dom}}
\newcommand{\ran}{\operatorname{ran}}

\newcommand{\N}{\mathbb{N}}
\newcommand{\Z}{\mathbb{Z}}
\newcommand{\Q}{\mathbb{Q}}
\newcommand{\R}{\mathbb{R}}

\newcommand{\sym}{\operatorname{Sym}}

\renewcommand{\and}{\text{ and }}

\newcommand{\set}[2]{\{#1:#2\}}
\renewcommand{\to}{\longrightarrow}

\author{J. Hyde} 
\author{J. Jonu\v sas} 
\author{J. D. Mitchell}
\author{Y. H. P\'eresse}
\title[Universal sequences]{Sets of universal sequences for the symmetric group
and analogous semigroups}

\begin{document}
\maketitle

\begin{abstract}
  A \textit{universal sequence} for a group or semigroup $S$ is a sequence of
  words $w_1, w_2, \ldots$ such that for any sequence $s_1, s_2, \ldots\in S$,
  the equations $w_n = s_n$, $n\in \mathbb{N}$, can be solved simultaneously in
  $S$. For example, Galvin showed that the sequence
  $(a^{-1}(a^nba^{-n})b^{-1}(a^nb^{-1}a^{-n})ba)_{n\in\mathbb{N}}$ is universal
  for the symmetric group $\sym(X)$ when $X$ is infinite, and Sierpi\'nski showed
  that $(a ^ 2 b ^ 3 (abab ^ 3) ^ {n + 1} ab ^ 2 ab ^ 3)_{n\in \N}$ is
  universal for the monoid $X ^ X$ of functions from the infinite set $X$ to
  itself.

  In this paper, we show that under some conditions, the set of universal
  sequences for the symmetric group on an infinite set $X$ is independent of
  the cardinality of $X$.  More precisely,  we show that if $Y$ is any set such
  that $|Y| \geq |X|$, then every universal sequence for $\sym(X)$ is also
  universal for $\sym(Y)$.  If $|X| > 2 ^ {\aleph_0}$, then the converse also
  holds. It is shown that an analogue of this theorem holds in the context of
  inverse semigroups, where the role of the symmetric group is played by the
  symmetric inverse monoid.  In the general context of semigroups, the full
  transformation monoid $X ^ X$ is the natural analogue of
  the symmetric group and the symmetric inverse monoid. If $X$ and $Y$ are
  arbitrary infinite sets, then it is an open question as to whether or not
  every sequence that is universal for $X ^ X$ is also universal for $Y ^ Y$.
  However, we obtain a sufficient condition for a sequence to be universal for
  $X ^ X$ which does not depend on the cardinality of $X$. A large class of
  sequences satisfy this condition, and hence are universal for $X ^ X$ for
  every infinite set $X$. 
\end{abstract}


\section{Introduction}

  Let $F$ be a free group, let $w\in F$, and let $G$ be a group. We say that
  the word $w$ is \textit{group universal} for $G$ if for all $g\in G$ there
  exists a group homomorphism $\phi: F \to G$ such that $(w)\phi = g$. For
  example, Or\'e~\cite{Ore1951aa} showed that every element of the symmetric
  group $\sym(X)$ on an infinite set $X$ is a commutator, that is,
  $x^{-1}y^{-1}xy$ is a universal word for $\sym(X)$ when $X$ is infinite. More
  generally, every element is a commutator in any Polish group with a comeagre
  conjugacy class~\cite{Kechris2007aa}. There are many such groups in addition
  to the symmetric group; for example, the automorphism group of the countable
  random graph; see~\cite{Kechris2007aa} for further examples. 

  Something much stronger than \'Ore's Theorem holds for the symmetric group:
  any word $w$, which is not a proper power of another word, in any free group
  $F$ is group universal for $\sym(X)$. 
  Silberger~\cite{Silberger1983aa}, Droste~\cite{Droste1985ac}, and
  Mycielski~\cite{Mycielski1987aa} proved some special cases of this theorem,
  the proof of which was completed by Lyndon~\cite{Lyndon1990aa} and Dougherty
  and Mycielski~\cite{Dougherty1999aa}.  Droste and Truss~\cite{Droste2006aa}
  proved that certain classes of words are group universal for the automorphism
  group of the countably infinite random graph.  

  Roughly speaking, if $w$ is a group universal word for $G$, then the equation
  $w = g$ can be solved for all $g\in G$. It is natural to extend this to
  solving simultaneous equations. If $F$ is a free group and $w_1, w_2,
  \ldots\in F$, then given any sequence $g_1, g_2, \ldots\in G$, is it possible
  to find a homomorphism $\phi:F \to G$ such that $(w_i)\phi = g_i$ for all
  $i\in \N$? The sequence $w_1, w_2, \ldots\in F$ is \textit{group universal}
  for $G$ if such a homomorphism exists for all $g_1, g_2, \ldots\in G$.
  In~\cite{Galvin1995aa}, Galvin showed that
  $(a^{-1}(a^nba^{-n})b^{-1}(a^nb^{-1}a^{-n})ba)_{n\in\mathbb{N}}$ is universal
  for the symmetric group on an infinite set. 
  Truss~\cite{Truss2009aa} showed
  that Galvin's proof works essentially unchanged for the groups of
  homeomorphisms of the Cantor space, the rationals $\Q$, and the irrationals
  $\R \setminus \Q$.  
  In~\cite{Hyde2016aa}, the present authors showed that there is an 
  2-letter universal sequence for the group $\aut(\Q, \leq)$ of
  order-automorphisms of the rationals $\Q$.
  In~\cite{Droste1987aa}, Droste and Shelah consider a more general notion of
  universality than that defined here. As a special case, it follows from the
  result in~\cite{Droste1987aa} that if $X$ and $Y$ are sets such that $|X|,
  |Y| > 2^{\aleph_0}$, then a finite sequence is universal, in our sense, for
  $\sym(X)$ if and only if it is universal for $\sym(Y)$. In
  Corollary~\ref{cor-universal-cardinlity-sym}, we extend this result to
  infinite universal sequences. 

  Let $A$ be a finite set, called an \textit{alphabet}, and let $A ^ +$ denote
  the \textit{free semigroup} consisting of all of the non-empty words over $A$
  with multiplication being simply the concatenation of words. 

  \begin{defn}\label{de-universal-sequence}
    Let $S$ be a semigroup and let $A$ be any alphabet. Then an infinite 
    sequence of words $w_1, w_2,\ldots\in A^{+}$ is \emph{semigroup
    universal} for $S$ if for any sequence $s_1, s_2, \ldots \in S$ there
    exists a homomorphism $\phi:A^+\to S$ such that $(w_n)\phi=s_n$ for all
    $n \geq 1$. 
  \end{defn}

  Suppose that $G$ is a group.  Since the free semigroup on a finite alphabet
  $A$ is a subsemigroup of the free group on $A$, it follows that every
  semigroup universal sequence for $G$ is also a group universal sequence for
  $G$. On the other hand, every group universal sequence over $A$ for $G$ is a
  semigroup universal sequence for $G$ over $A\cup A ^ {-1}$. So, broadly
  speaking, the notion of semigroup universal sequences includes the
  corresponding notion for groups, and as such we will restrict ourselves to
  considering only semigroup universal sequences. 

  The existence of a universal sequence over a finite alphabet for a semigroup
  $S$ implies that $S$ has several further properties.  For instance, if $S$ is
  such a semigroup and $X$ is any generating set for $S$, then there exists an
  $n\in \N$ such that every element of $S$ can be given as a product over $X$
  of length at most $n$. This is known as the \textit{Bergman property} after
  Bergman's seminal paper~\cite{Bergman2006aa}; see also~\cite{Maltcev2009aa,
  Mitchell2011ab}.  A group $G$ with the Bergman property automatically
  satisfies Serr\'e's properties (FA) and (FH); see~\cite{Kechris2007aa}. There
  are, of course, many groups which have no universal sequences. For example,
  since every group with a universal sequence has property (FA), any group with
  $\Z$ as a homomorphic image has no universal sequences.

  The question of whether a universal sequence exists for a given semigroup
  has a long history, which predates \'Ore's Theorem~\cite{Ore1951aa}. 
  In 1934, Sierpi\'nski~\cite{Sierpinski1934aa} showed that $(a b ^ {n - 1} c d
  ^ {n - 1})_{n \in \N}$ is a universal sequence for the semigroup of continuous
  functions on the closed unit interval $[0, 1]$ in $\mathbb{R}$, and in 1935,
  \cite{Sierpinski1935aa} showed that $(a ^ 2 b ^ 3 (abab ^ 3) ^
  {n + 1} ab ^ 2 ab ^ 3)_{n\in \N}$ is universal for the semigroup $X ^ X$ of
  functions from the infinite set $X$ to itself where the operation is
  composition of functions.  Several further universal sequences are known for
  $X ^ X$ when $X$ is infinite, such as $(aba^{n+1}b^2)_{n\in\N}$;
  see Banach~\cite{Banach1935aa}.
  It can be shown that universal sequences are preserved by homomorphisms of
  semigroups, and a, more or less straightforward, counting argument shows that
  every semigroup with a universal sequence is of cardinality at least continuum.
  It follows that a semigroup with a countable homomorphic image has no
  universal sequences.  Some more recent results about universal sequences of
  semigroups include~\cite[Theorem
  31]{East2012aa},~\cite[Theorem 37]{East2014aa}, and~\cite[Theorem
  6.1]{East2017aa}.  See~\cite{Mitchell2011ab} and the references therein for
    further background on universal sequences for semigroups. 

  Given that a universal sequence for a given semigroup $S$ exists, it is
  natural to attempt to classify all of the universal sequences for $S$. For
  instance, given that universal words for the symmetric group $\sym(X)$ on any
  infinite set $X$ are completely classified, we might ask for a classification
  of universal sequences for $\sym(X)$. We do not provide such a
  classification, but in Section~\ref{sect-symm}, we show that if $X$ is any
  infinite set and $Y$ is any set containing $X$, then every sequence that is
  universal for the symmetric group $\sym(X)$ on $X$ is universal for
  $\sym(Y)$. The converse holds when $|X|$ is greater than $2 ^ {\aleph_0}$. It
  is, however, not known whether it remains true if $|X| \leq 2^{\aleph_0}$,
  see Question~\ref{question:remove-ass}. We also show that the analogous
  results hold for the symmetric inverse monoids.
  
  In the context of clones of polymorphisms, the natural equivalent of words
  are \textit{terms}.  In \cite{McNulty:1976mi}, McNulty gave a sufficient
  condition for such a sequence of terms to be universal. A special case of our
  main result in Section~\ref{sect-trans} and of McNulty's result, is
  Corollary~\ref{cor-no-pre-suf-overlap}.  Taylor~\cite{Taylor1981aa} showed
  that the question of whether or not a term is universal for the clone of
  polymorphisms is undecidable. 

  The question of describing universal words for $X ^ X$, and whether or not
  such words depend on the cardinality of $X$, is Problem 27 in~\cite{:2006fu}.
  As a partial result in the direction of solving this problem in
  Section~\ref{sect-trans}, we give a natural sufficient condition under which
  a sequence over a 2-letter alphabet is universal for $X ^ X$.  A special case
  of this condition is any sequence of distinct words $w_1, w_2, \ldots$ where
  no $w_i$ is a subword of any $w_j$, $i\not=j$, and no proper prefix of any
  $w_i$ is a suffix of any $w_j$. We will show in the next proposition that the
  apparent restriction to $2$-letter alphabets is, in fact, not a restriction
  at all.  

  Throughout the paper we use the convention that a countable set can be finite
  or infinite.

\begin{prop}[cf. Problem 27 in \cite{:2006fu}] 
  \label{prop-min-us-rank}
  Let $S$ be a semigroup and let $A$ be an alphabet such that there is a
  universal sequence for $S$ over $A$. Then for every countable
  alphabet $B$ there exists a function $\phi: (B^+)^\N \to (A^+)^\N$ such that
  $(w_1, w_2, \ldots)\in (B^+) ^ \N$ is universal for $S$ if and only if
  $(w_1, w_2, \ldots)\phi\in (A^+)^\N$ is universal for $S$.
\end{prop}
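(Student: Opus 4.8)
The plan is to build $\phi$ from a single fixed universal sequence over $A$ by substituting its terms for the letters of $B$. Fix a universal sequence $u_1, u_2, \ldots \in A^+$ for $S$, which exists by hypothesis, and enumerate the countable alphabet as $B = \{b_1, b_2, \ldots\}$ (allowing $B$ to be finite). Let $\theta : B^+ \to A^+$ be the unique semigroup homomorphism determined by $(b_i)\theta = u_i$, and define $\phi : (B^+)^\N \to (A^+)^\N$ coordinatewise by $(w_1, w_2, \ldots)\phi = ((w_1)\theta, (w_2)\theta, \ldots)$. The whole proposition then reduces to showing that $(w_n)_{n \in \N}$ is universal for $S$ if and only if $((w_n)\theta)_{n \in \N}$ is.

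For the implication that does not rely on the universality of $(u_i)_{i \in \N}$, suppose $((w_n)\theta)_{n \in \N}$ is universal for $S$ and let $s_1, s_2, \ldots \in S$ be arbitrary. I would obtain a homomorphism $\psi : A^+ \to S$ with $((w_n)\theta)\psi = s_n$ for all $n$, and then set $\chi = \theta\psi : B^+ \to S$. Since function application is on the right, $(w_n)\chi = ((w_n)\theta)\psi = s_n$, so $(w_n)_{n \in \N}$ is universal for $S$. This step is essentially formal: composing the substitution $\theta$ with a witnessing homomorphism for the image sequence yields a witnessing homomorphism for the original sequence.

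The reverse implication is where the hypothesis that $A$ admits a universal sequence is used. Suppose $(w_n)_{n \in \N}$ is universal for $S$ and let $s_1, s_2, \ldots \in S$ be given. First obtain $\chi : B^+ \to S$ with $(w_n)\chi = s_n$ for all $n$. The key step is to realise $\chi$ as $\theta$ followed by a homomorphism out of $A^+$: using the universality of $(u_i)_{i \in \N}$, I would find $\psi : A^+ \to S$ with $(u_i)\psi = (b_i)\chi$ for every $i$, where, if $B$ is finite, the finite target sequence $((b_i)\chi)_i$ is first padded arbitrarily to an infinite one so that universality applies. Since $\theta\psi$ and $\chi$ are homomorphisms that agree on the free generators $b_i$ of $B^+$, they coincide, and hence $((w_n)\theta)\psi = (w_n)(\theta\psi) = (w_n)\chi = s_n$. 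Thus $((w_n)\theta)_{n \in \N}$ is universal for $S$, completing the equivalence.

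I expect no serious obstacle: the construction is forced once one views $\phi$ as substituting universal words for letters, and both implications are short. The only point needing a little care is the lift in the reverse direction, namely guaranteeing a homomorphism $\psi : A^+ \to S$ whose composite with $\theta$ recovers $\chi$; this is exactly what the universality of $(u_i)_{i \in \N}$ provides, together with the observation that agreement on the generators of the free semigroup $B^+$ forces $\theta\psi = \chi$. Finally, I would remark that taking $A$ to be a two-letter alphabet, using any of the known two-generator universal sequences quoted in the introduction, specialises this to the claim that restricting to $2$-letter alphabets entails no loss of generality.
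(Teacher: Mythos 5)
Your proposal is correct and is essentially the paper's own argument: both define $\phi$ by substituting the words of a fixed universal sequence over $A$ for the letters of $B$, and both directions are verified by composing the substitution homomorphism with a witnessing homomorphism exactly as you do. The only (harmless) addition is your explicit padding remark for finite $B$.
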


\begin{proof} 
  By assumption, there exists a universal sequence $(w_1, w_2,\ldots)\in (A ^
  +) ^ \N$ for $S$.  If $(u_1, u_2 \ldots)$ is a sequence over $B = \{b_1,
  b_2, \ldots\}$, then for every $m\in \N$ we define $v_m \in (A ^ +) ^
  \N$ to be the word obtained by replacing every occurrence of every letter
  $b_j$ in $u_m \in B ^ +$ by the word $w_j\in A ^ +$.  We define $\phi$ by
  $(u_1, u_2, \ldots)\phi = (v_1, v_2,\ldots)$.

  If $(u_1, u_2, \ldots)$ is universal for $S$ over $B$, then for any choice of
  $s_1, s_2, \ldots \in S$ there is a homomorphism $\Phi: B^+ \to S$ such that
  $(u_i)\Phi = s_i$ for all $i$.  Since $(w_1, w_2,\ldots)$ is universal there
  is a homomorphism $\Psi: A^+ \to S$ such that $(w_j)\Psi = (b_j)\Phi$ for all
  $j\in \{1, \ldots, n\}$. Then $(v_i)\Psi = (u_i)\Phi = s_i$ for all $i$, and
  so $(v_1, v_2, \ldots)$ is universal also.

  On the other hand, if $(v_1, v_2, \ldots)$ is universal, then for every
  choice of $s_1, s_2, \ldots \in S$ there is a homomorphism $\Phi : A^+ \to S$
  such that $(v_i)\Phi = s_i$ for all $i$. If $\Psi :B^+ \to S$ is the natural
  homomorphism extending $(b_j)\Psi = (w_j)\Phi$ for all $j$, then $(u_i)\Psi =
  (v_i)\Phi = s_i$ for all $i$, and thus $(u_1, u_2, \ldots)$ is universal.
\end{proof}
 

  
  We conclude this section with some standard definitions and notation.
  A \textit{monoid} is a semigroup $M$ with an identity, that is an element
  $1_M \in M$ such that $1_M m = m 1_M = m$ for all $m \in M$.  A
  \textit{submonoid} of a monoid $M$ is a subsemigroup containing the identity
  $1_M$ of $M$.  Any semigroup can be made into a monoid by adjoining an
  identity as follows.  If $S$ is a semigroup and $1_S \notin S$, define an
  operation on $S^1 = S \cup \{1_S\}$ which extends the operation of $S$ by
  $s1_S = 1_Ss= s$ for all $s \in S^1$.  The set $S^1$ with this operation is a
  monoid. An element $0_S$ of a semigroup $S$ is called a \textit{zero} if
  $0_Ss = s0_S = 0_S$ for all $s\in S$. A zero can be adjoined to a semigroup
  $S$ in much the same way as an identity; we denote this by $S ^ 0$.  
  The \textit{free monoid} $A^*$ is obtained from $A^+$ by adjoining an
  identity $\varepsilon$, usually referred to as the \textit{empty word}.  If
  $w = a_1 \cdots a_n \in A^*$ and $i,j \in \{1, \ldots, n\}$ are
  such that $i \leq j$, then $a_1 \cdots a_{i - 1}$ is a \textit{prefix} of
  $w$, $a_{j+1} \cdots a_n$ is a \textit{suffix} of $w$, and $a_i \cdots a_j$
  is a \textit{subword} of $w$. The empty word $\varepsilon$ is a
  prefix and a suffix of every word. 

  The analogue of the symmetric group in the context of semigroups is the
  \textit{full transformation monoid} $X ^ X$ consisting of all functions from
  the set $X$ to $X$ under composition of functions. Every semigroup
  is isomorphic to a subsemigroup of some full transformation monoid; see
  \cite[Theorem 1.1.2]{Howie1995aa}.
  
  An \textit{inverse semigroup} is a semigroup $S$ such that for all $x\in S$
  there exists a unique $x ^ {-1}\in S$ such that $xx ^{-1}x = x$ and $x^ {-1}x
  x ^ {-1} = x ^ {-1}$.  A \textit{partial permutation} on a set $X$ is a
  bijection $f: A \to B$ between subsets $A$ and $B$ of $X$. The set $A$ is
  the \textit{domain} of $f$ and is denoted $\dom(f)$; the set $B$ is
  called the \textit{range} and is denoted $\ran(f)$.  If $f : X \to Y$ is a
  partial permutation and $Z \subseteq X$, then the \textit{restriction} of $f$
  to $Z$ is the partial permutation $f|_Z : Z \to Y'$ where $Y' = \{(z)f : z
  \in Z\}$ defined by $(z)f|_Z = (z)f$ for all $z \in Z$.  Under the usual
  composition of binary relations, the set $I(X)$ of all partial permutations
  on $X$ is an inverse semigroup; $I(X)$ will be referred to as the
  \textit{symmetric inverse monoid} on $X$. The Wagner-Preston Representation
  Theorem~\cite[Theorem 5.1.7]{Howie1995aa} states that every inverse semigroup
  is isomorphic to an inverse subsemigroup of $I(X)$ for some set $X$.  It is
  possible to define the notion of an inverse semigroup universal sequence,
  which is analogous to the notions for groups and semigroups. We have already
  argued that group and semigroup universal sequences are interchangeable, and
  a similar argument applies to inverse semigroups.  For the sake of brevity we
  refer to semigroup universal sequences as \textit{universal sequences}.

\section{The role of $|X|$ for universal sequences in $\sym(X)$ and $I(X)$}
\label{sect-symm}

In this section, we consider a class of semigroups which includes the symmetric
groups and symmetric inverse monoids on arbitrary infinite sets. In particular,
let $\alpha$ be either an arbitrary infinite cardinal or $0$, and let $X$ be
any set. Then we denote by $I(X, \alpha)$ the inverse subsemigroup of $I(X)$
consisting of all the partial permutations $f$ of $X$ such that $|X\setminus
\dom(f)|, |X\setminus \ran(f)| \leq \alpha$. Note that $I(X, 0) = \sym(X)$, the
symmetric group on $X$, and that $I(X, \alpha)$ is the whole of $I(X)$ for any
$\alpha \geq |X|$. Recall that an infinite cardinal $\lambda$ is \emph{regular}
if it cannot be expressed as the union of strictly less than $\lambda$ many
sets each of cardinality strictly less than $\lambda$.


The main theorem of this section is the following.

\begin{thm}\label{thm-different-cardinalities}
  Let $X$ and $Y$ be sets, and let $\alpha$ be any infinite cardinal number or
  $0$.  Then the following hold:
  \begin{enumerate}[\rm (i)]
    \item 
      if $\aleph_0\leq |X| < |Y|$ and $\alpha \in \{0, |Y|\}$, then every
      sequence that is universal for $I(X,\alpha)$ is also
      universal for $I(Y, \alpha)$;
    
    \item 
      if $2^{\aleph_0}< |X| < |Y|$, $\alpha < |X|$ or $\alpha \geq |Y|$, and $|X|$
      is a regular cardinal, then every sequence that is universal 
      for $I(Y, \alpha)$ is also universal for $I(X, \alpha)$.
  \end{enumerate}
\end{thm}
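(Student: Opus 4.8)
I will treat the two parts separately, but both rest on two elementary observations: if a set $W$ is invariant under a family of partial permutations then each of them restricts to an element of $I(W,\alpha)$, and the inverse subsemigroup generated by a \emph{countable} family of elements of $I(Z)$ has all of its orbits on $Z$ of size at most $\aleph_0$.

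For part (i), suppose $(w_n)$ is universal for $I(X,\alpha)$ and let $(t_n)$ be a target sequence in $I(Y,\alpha)$. The plan is to split $Y$ into invariant blocks on which the problem looks like $I(X,\alpha)$, solve it blockwise, and reassemble. Let $T=\genset{t_1,t_2,\ldots}$; since $T$ is countably generated, each of its orbits on $Y$ has size at most $\aleph_0\le|X|$, and (as they cover $Y$) there are $|Y|$ of them. Using $|X|<|Y|$, I would group these orbits into $|Y|$ blocks, each a union of $|X|$-many orbits, obtaining a partition $Y=\bigsqcup_i Y_i$ into $T$-invariant sets with $|Y_i|=|X|$. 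For each $i$ the restriction $t_n|_{Y_i}$ lies in $I(Y_i,\alpha)$, and since $I(Y_i,\alpha)\cong I(X,\alpha)$ the sequence $(w_n)$ is universal for $I(Y_i,\alpha)$; hence there are $a^{(i)}_1,\ldots,a^{(i)}_k\in I(Y_i,\alpha)$ with $w_n(a^{(i)}_1,\ldots,a^{(i)}_k)=t_n|_{Y_i}$ for all $n$. Setting $a_\ell=\bigsqcup_i a^{(i)}_\ell$ gives $w_n(a_1,\ldots,a_k)=t_n$. The one point requiring care—and the reason the hypothesis restricts to $\alpha\in\{0,|Y|\}$—is that $a_\ell$ must lie in $I(Y,\alpha)$: its defect is the sum of the $|Y|$ block defects, each at most $\alpha$. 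When $\alpha=0$ every block defect is $0$, so $a_\ell\in\sym(Y)$; when $\alpha=|Y|$ we have $I(Y,\alpha)=I(Y)$ and there is nothing to check. For intermediate $\alpha$ this recombination genuinely fails, which is the main obstacle and explains the hypothesis.

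For part (ii), suppose $(w_n)$ is universal for $I(Y,\alpha)$, fix $(t_n)$ in $I(X,\alpha)$, and regard $X$ as a subset of $Y$. The plan is to realise a carefully chosen extension of $(t_n)$ in $I(Y,\alpha)$, restrict to an invariant set of size $|X|$, and transport the result back. Write $\lambda=|X|$ and $T=\genset{t_1,t_2,\ldots}\le I(X)$; decompose $X$ into $T$-orbits and sort these by their \emph{isomorphism type} as countable $T$-structures. There are at most $2^{\aleph_0}$ such types. Because $\lambda>2^{\aleph_0}$ is regular, the types occurring with multiplicity $<\lambda$ account for fewer than $\lambda$ points in total, so the types of multiplicity exactly $\lambda$ cover all but $<\lambda$ of $X$; the same counting (together with the bound $<\lambda$ on the number of points ever outside a domain or range when $\alpha<|X|$) yields a \emph{full}, defect-free orbit-type $\theta_0$ of multiplicity $\lambda$. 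I would then replace $Y\setminus X$ by $|Y|$ disjoint copies of $\theta_0$-orbits (this does not change $|Y|$, so $(w_n)$ is still universal for $I(Y,\alpha)$) and define $\hat t_n\in I(Y,\alpha)$ to act as $t_n$ on $X$ and by the $\theta_0$-pattern on $Y\setminus X$; since $\theta_0$ is defect-free the defect of $\hat t_n$ equals that of $t_n$, so $\hat t_n\in I(Y,\alpha)$ (when $\alpha\ge|Y|$ one may argue more crudely inside $I(Y,\alpha)=I(Y)$). Crucially $\hat t_n$ preserves both $X$ and $Y\setminus X$.

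By universality there are $a_1,\ldots,a_k\in I(Y,\alpha)$ with $w_n(a_1,\ldots,a_k)=\hat t_n$; let $Z$ be the closure of $X$ under $G=\genset{a_1,\ldots,a_k}$. Countability of $G$-orbits and regularity of $\lambda$ give $|Z|=\lambda$, and $Z$ is $G$-invariant, so each $a_\ell$ restricts to $I(Z,\alpha)$ with $w_n(a_1|_Z,\ldots,a_k|_Z)=\hat t_n|_Z$. The heart of the argument is to identify $(Z;\genset{\hat t_n|_Z})$ up to isomorphism. Since $\hat t_n$ preserves $X$ and $Y\setminus X$, the orbits of $\genset{\hat t_n|_Z}$ inside $X$ are exactly the $T$-orbits, while those in $Z\setminus X\subseteq Y\setminus X$ are all of type $\theta_0$; thus $(Z;\genset{\hat t_n|_Z})$ is $(X;T)$ together with at most $\lambda$ extra $\theta_0$-orbits. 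As $\theta_0$ already occurs $\lambda$ times in $X$, these extras are absorbed and $(Z;\genset{\hat t_n|_Z})\cong(X;T)$. Transporting $a_1|_Z,\ldots,a_k|_Z$ along this isomorphism $Z\to X$ produces $b_1,\ldots,b_k\in I(X,\alpha)$ with $w_n(b_1,\ldots,b_k)=t_n$, as required. The main obstacle is exactly this structural matching: one must run the orbit-type bookkeeping for \emph{partial} permutations (chains as well as cycles), and it is precisely the hypotheses $\lambda>2^{\aleph_0}$ and $\lambda$ regular that keep the rare part of size $<\lambda$ and guarantee a saturated, defect-free type with which to fill $Y\setminus X$.
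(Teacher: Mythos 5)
Your proposal is correct and follows essentially the same route as the paper: part (i) is the paper's decomposition of $Y$ into invariant blocks of size $|X|$ (solved blockwise, i.e.\ in the direct power $I(X,\alpha)^{|Y|}$, and reassembled, with the defect check isolating $\alpha\in\{0,|Y|\}$), and part (ii) is the paper's argument of counting at most $2^{\aleph_0}$ orbit types, using regularity to find a saturated defect-free type, padding $Y\setminus X$ with copies of it, restricting a solution to the closure of $X$, and absorbing the extra copies to conjugate back to $(X;\langle s_n\rangle)$. No substantive divergence to report.
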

\begin{proof}
  \textbf{(i).} 
  Let $w_1, w_2, \ldots$ be a universal sequence for $I(X, \alpha)$ over some
  countable alphabet $A$, and let $s_1, s_2, \ldots\in I(Y, \alpha)$ be
  arbitrary. 
  It follows from~\cite[Proposition~2.1(ii)]{Hyde2016aa}, $w_1,
  w_2, \ldots$ is also universal for $I(X, \alpha)^{|Y|}$.
  
  We define $S$ to be the inverse semigroup generated by $\{s_1, s_2,
  \ldots\}$. Then $S$ is countable and the sets $\set{(z)s}{s\in S ^ 1}$, where
  $z\in Y$, partition $Y$ into $|Y|$ many countable sets. We refer to these sets as
  the \textit{blocks} of $S$ on $Y$.
  Define a partition $\set{X_y}{y\in Y}$ of $Y$ such
  that each $X_y$ is a union of blocks and $|X_y| = |X|$, this is possible
  since the blocks are countable and $X$ is infinite. For every $y\in Y$, let
  $\mu_y: X_y\to X$ be any bijection. It follows that $f: S \to
  I(X,\alpha)^{|Y|}$ defined by $(s)f = (\mu_y^{-1}s\mu_y)_{y\in Y}$ is an
  injective homomorphism. 

  Define a map $g: I(X, \alpha) ^ {|Y|} \to I(Y)$ by 
  \[
    ((b_y)_{y\in Y})g = \bigcup_{y\in Y} \mu_yb_y\mu_{y}^{-1}.
  \]
  Since the sets $X_y$ partition $Y$, $((b_y)_{y\in Y})g$ is a well-defined
  partial permutation of $Y$.  We will show that if $\alpha$ is either $|Y|$ or
  $0$, then, in fact, $g$ is contained in $I(Y, \alpha)$. If $\alpha$ is $|Y|$,
  then $I(Y, \alpha) = I(Y)$, as required. Suppose that $\alpha = 0$. Then for
  every $(b_y)_{y \in Y} \in I(X, \alpha)^{|Y|}$ and every $y \in Y$
  \[
    |X_y\setminus \dom(\mu_yb_y\mu_{y}^{-1})| = |X\setminus
    \dom(b_y)| = 0 
  \]
  and similarly
  \[
    |X_y\setminus \ran(\mu_yb_y\mu_{y}^{-1})| = |X\setminus \ran(b_y)| = 0.
  \]
  Hence  the domain and range of $((b_y)_{y\in Y})g$ are both $Y$, and so
  $((b_y)_{y\in Y})g\in I(Y, \alpha)$. Hence $g: I(X, \alpha)^{|Y|} \to I(Y,
  \alpha)$ is a homomorphism, and $(s)f g = s$ for all $s \in S$.
  
  Since $w_1, w_2, \ldots$ is a universal sequence for $I(X, \alpha)^{|Y|}$,
  there exists a homomorphism $\phi: A^+ \to I(X, \alpha)^{|Y|}$ such that
  $(w_n)\phi = (s_n)f$ for all $n$, and so $\phi \circ g : A ^ + \to I(Y,
  \alpha)$ is a homomorphism and $(w_n)\phi \circ g = (s_n)fg = s_n$, as
  required. 

  \textbf{(ii).} 
  Let $w_1, w_2, \ldots$ be a universal sequence for $I(Y, \alpha)$ over some
  countable alphabet $A$, and let $s_1, s_2, \ldots\in I(X, \alpha)$ be
  arbitrary. 
  
  As in part (i) we denote the inverse subsemigroup of $I(X, \alpha)$ generated
  by $\{s_1, s_2, \ldots\}$ by $S$, and let $\Omega$ be the set of blocks of
  $S$ on $X$. We define an equivalence relation $\sim$ on $\Omega$ as follows:
  for $U, V\in \Omega$ we write $U \sim V$ if there is a
  bijection $\phi:U \to V$ such that $s_n \circ \phi = \phi \circ s_n$
  for all $n\in \N$.  In other words, $U\sim V$ if and only if the
  inverse semigroup $S$ has the same action on $U$
  and $V$, up to relabelling the points. 

  If $U\in \Omega$, then $|U| \leq \aleph_0$ and since $|X| >
  \aleph_0$, it follows that $|\Omega| = |X|$. Since a countable semigroup has
  at most $\aleph_0 ^ {\aleph_0} = 2 ^ {\aleph_0}$ distinct (partial) actions
  on a given countable set, it follows that there are at most $2 ^ {\aleph_0}$
  equivalence classes of $\sim$. Since $|\Omega| = |X| > 2 ^ {\aleph_0}$ and
  $|X|$ is a regular cardinal, $\Omega$ cannot be written as a union of $2 ^
  {\aleph_0}$ sets of cardinality strictly less than $|X|$. Hence there exists
  an equivalence class $E$ of $\sim$ such that $|E| = |X|$. 

  For a fixed $U\in E$, we define $Y'$ to be the disjoint union
  of $Y\times U$ and $X$ and also for each $n$ we define $t_n:Y'\to Y'$ by
  \begin{equation*}
    (x)t_n = 
    \begin{cases}
      (x)s_n     & x \in X \\
      (y, (z)s_n) & x = (y, z) \in Y\times U.
    \end{cases}
  \end{equation*}
  Obviously $t_n$ is a partial permutation, and 
  we will show that $t_n\in I(Y', \alpha)$. There are two cases to consider,
  when $\alpha = |Y|$ and when $\alpha < |X|$. If $\alpha = |Y|$, then $I(Y',
  \alpha)$ consists of all partial permutations on $Y'$, and
  so $t_n \in  I(Y', \alpha)$. 
  The other case is significantly more complicated.
 
  \begin{claim}
    If $\alpha < |X|$, then $t_n\in I(Y', \alpha)$ for all $n\in \N$.
  \end{claim}
  \begin{proof}
    We define
    \[
      Z = \bigcup_{m \geq 1 } \big(X\setminus \dom(s_m)\big) \cup
      \big(X\setminus \ran(s_m)\big).
    \]
    Since $Z$ is a countable union of sets with cardinality at most $\alpha$,
    $|Z| \leq \alpha$. 
 
    If $V, W\in E$ and $V\cap Z\not=\varnothing$, then we will
    show that $W\cap Z\not=\varnothing$ also. Since $V, W\in E$, there exists a
    bijection $\phi:V\to W$ such that $s_n\phi= \phi s_n$ for all $n\in \N$. 
    Suppose that $x\in V\cap Z$. Then by the definition of $Z$ there exists
    $m\in\N$ such that $x\not\in\dom(s_m)$ or $x\not\in \ran(s_m)$. 
    If $x\not\in\dom(s_m)$, then $x\not\in \dom(s_m\phi) = \dom(\phi s_m)$. But
    $x\in \dom(\phi) = V$, and so $(x)\phi \not\in \dom(s_m)$. In other words,
    $(x)\phi \in W\cap Z$, which is consequently non-empty. The case that
    $x\not\in \ran(s_m)$ is dual.

    So, if $V\cap Z\not=\varnothing$ for some $V\in E$, then $W\cap Z
    \not=\varnothing$ for all $W\in E$. Hence since 
    elements of $E$ are pairwise disjoint it follows that
    $$\alpha < |X| = |E| \leq |\bigcup_{V\in E} V \cap Z| \leq |Z|\leq \alpha$$
    a contradiction. Hence $V\cap Z =\varnothing$, or
    equivalently, 
    \[
      V \subseteq \bigcap_{m \geq 1} \dom(s_m) \cap \ran(s_m),
    \]
    for all $V\in E$. Thus if $m \geq 1$ then $s_m|_U: U \to U$ is surjective,
    and since every element of $I(X, \alpha)$ is injective, $s_m$ is a
    permutation on $U$. Hence it follows that $Y'\setminus \dom(t_n) = X
    \setminus \dom(s_n)$ for all $n\in \N$.  In particular, $t_n\in I(Y',
    \alpha)$ for all $n\in \N$, as required.
  \end{proof}

  Since $w_1, w_2, \ldots \in A ^ +$ is universal for $I(Y, \alpha)$ and $|Y| =
  |Y'|$, it follows that  $w_1, w_2, \ldots$ is universal for $I(Y',
  \alpha)$ also.  Thus there is a homomorphism $\Phi: A ^ + \to I(Y',  \alpha)$
  such that $(w_n)\Phi = t_n$ for all $n\in \N$. We define 
  $X' = \set{(x)f}{x\in X,\ f\in (A^ +)\Phi}\cup X \subseteq Y'$.
  Since $(A ^ +)\Phi$ is countable and $|X| > \aleph_0$, it follows that
  $|X'| = |X|$. 

  Let $T$ be the inverse subsemigroup of $I(Y', \alpha)$ generated by $\{t_1,
  t_2, \ldots\}$ and let $\Omega'$ be the set of blocks of $T$ acting on
  $X'\setminus X$.  Since $|E| = |X|$ and $|\Omega'| \leq |X|$,
  there exists a
  bijection $b: E \to \Omega' \cup E$.  We will show that for every $V\in E$ there
  exists a bijection $\phi_V: V\to (V)b$ such that $t_n \phi_V= \phi_V t_n$ for all $n\in
  \N$. If $(V)b\in E$, then this follows immediately from the definition of $E$
  and since $t_n|_X=s_n$. Suppose that $(V)b\in \Omega'$. If $(x,y) \in (V)b
  \subseteq X'\setminus X \subseteq Y'\setminus X = Y \times U$, then
  $$(V)b = \set{(x, (y)s)}{s\in S} = \{x\}\times U$$
  since $U$ is a block of the action of $S$ on $X$.
  Since $U, V\in E$, there exists bijection $\phi: V \to U$ such that $\phi s_n
  = s_n \phi$ for all $n\in \N$. Define $\phi_V: V \to \{x\}\times U$ 
  so that $(a)\phi_V = (x, (a)\phi)$. Since $\phi$ is a bijection, so too is
  $\phi_V$. If $n\in \N$ and $a\in V$ are arbitrary, then 
  $$(a)\phi_Vt_n = (x, (a)\phi)t_n = (x, (a)\phi s_n) = (x, (a)s_n \phi) =
  (a)s_n\phi_V = (a)t_n\phi_V.$$

  We define $\psi: X\to X'$  by $$\psi = \bigcup_{V\in E} \phi_V \cup
  1_{X\setminus \bigcup_{W\in E} W}.$$ Note that $\psi$ is injective,
  $\dom(\psi) = X$, and $\ran(\psi) = \left( \bigcup_{W \in E} (W)b \right)
  \cup \left(X \setminus \bigcup_{W \in E} W \right) = X'$. The last equality
  holds since $b$ is a bijection from $E$ to $\Omega' \cup E$ and so by
  definition of $\Omega'$
  \[
    \bigcup_{W \in E} (W)b = \bigcup_{A \in \Omega' \cup E} A = \left( X'
    \setminus X \right) \cup B,
  \]
  where $B = \bigcup_{A \in E} A \subseteq X$.
  Hence $\psi$ is a
  bijection. We will show that $\psi t_n = s_n \psi$ for all $n\in \N$.
  Suppose that $x\in X$. Then either $x\not \in V$ for all $V\in E$ or $x\in V$
  for some $V\in E$.  In the first case, $(x)\psi t_n = (x)t_n = (x)s_n$ and
  since $(x)s_n \not\in V$ for all $V\in E$, it follows that $(x)\psi t_n=
  (x)s_n = (x)s_n\psi$, as required. In the second case, $(x)\psi t_n =
  (x)\phi_Vt_n = (x)t_n\phi_V = (x)s_n\phi_V$, and since $(x)s_n\in V$,
  $(x)s_n\phi_V= (x)s_n\psi$.

  Define $\Lambda: A ^ + \to I(X, \alpha)$ by $(w)\Lambda = \psi (w)\Phi|_{X'}
  \psi ^ {-1}$ for all $w\in A^+$. By the definition of $X'$, the partial
  permutation $(w)\Phi$ maps $X'$ to $X'$, and so $(w)\Lambda$ is a partial
  permutation of $X$. Also $$|X \setminus \dom((w)\Lambda)| = | X' \setminus
  \dom((w)\Phi)| \leq |Y'\setminus \dom((w)\Phi)| \leq \alpha$$ and similarly
  $|X \setminus \ran((w)\Lambda)| \leq \alpha$. Hence $(w)\Lambda \in I(X,
  \alpha)$. Finally, let $u, v \in A^+$. Then $$(uv)\Lambda = \psi
  (u)\Phi|_{X'} 1_{X'} (v)\Phi|_{X'} \psi^{-1} = \psi (u)\Phi|_{X'}\psi^{-1} \psi
  (v)\Phi|_{X'} \psi^{-1} = \Lambda(u) \Lambda(v),$$ and so $\Lambda$ is a
  homomorphism. Furthermore,
  $$(w_n) \Lambda = \psi\ (w_n)\Phi\ \psi ^ {-1} =  \psi
  t_n \psi ^ {-1} = s_n$$
  and hence $w_n$ is universal for $I(X, \alpha)$. 
\end{proof}

\begin{cor}\label{cor-universal-cardinlity-sym}
  Let $X$ and $Y$ be infinite sets such that $|X| < |Y|$. Then the following
  hold:
  \begin{enumerate}[\rm (i)]
    \item 
      every sequence that is universal for $\sym(X)$ is universal for
      $\sym(Y)$;
    
    \item 
      if $2^{\aleph_0} < |X|$, then every sequence that is universal for
      $\sym(Y)$ is universal for $\sym(X)$.
  \end{enumerate}
  In particular, if $2^{\aleph_0} < |X| \leq |Y|$, then the universal sequences
  for $\sym(X)$ coincide with those for $\sym(Y)$.
\end{cor}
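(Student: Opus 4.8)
The plan is to deduce both parts of the corollary from Theorem~\ref{thm-different-cardinalities} by specialising to $\alpha = 0$ and using the identity $I(X,0) = \sym(X)$. Part~(i) should be immediate: the hypotheses $\aleph_0 \le |X| < |Y|$ together with $\alpha = 0 \in \{0, |Y|\}$ are precisely those of Theorem~\ref{thm-different-cardinalities}(i), so every sequence universal for $\sym(X) = I(X,0)$ is universal for $\sym(Y) = I(Y,0)$.

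Part~(ii) requires more care, because Theorem~\ref{thm-different-cardinalities}(ii) carries the extra hypothesis that $|X|$ be a \emph{regular} cardinal, whereas the corollary asserts the conclusion for every $|X| > 2^{\aleph_0}$. First I would dispose of the regular case: if $|X|$ is regular, then $2^{\aleph_0} < |X| < |Y|$ and $\alpha = 0 < |X|$ place us directly in the scope of Theorem~\ref{thm-different-cardinalities}(ii), giving that every sequence universal for $\sym(Y)$ is universal for $\sym(X)$.

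The main obstacle, and the only genuine content beyond invoking the theorem, is the case in which $|X|$ is singular. Here I would interpolate a regular cardinal. Since a singular cardinal is necessarily a limit cardinal and $|X| > 2^{\aleph_0}$, the successor $\mu := (2^{\aleph_0})^+$ is a regular cardinal with $2^{\aleph_0} < \mu < |X|$; the strict inequality $\mu < |X|$ holds because $\mu$ is a successor, hence regular, and so cannot equal the singular cardinal $|X|$, while $\mu \le |X|$ is forced by $|X| > 2^{\aleph_0}$. Fixing a set $Z$ with $|Z| = \mu$, I would apply Theorem~\ref{thm-different-cardinalities}(ii) to the pair $Z, Y$ (valid since $\mu$ is regular and $2^{\aleph_0} < \mu < |Y|$) to conclude that any sequence universal for $\sym(Y)$ is universal for $\sym(Z)$, and then apply part~(i) of the corollary to the pair $Z, X$ (valid since $\mu = |Z| < |X|$) to transfer universality from $\sym(Z)$ to $\sym(X)$. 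Chaining the two implications settles the singular case.

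Finally, the concluding ``in particular'' is a formal consequence of the two parts: when $|X| = |Y|$ the symmetric groups are isomorphic, so their sets of universal sequences trivially coincide, and when $2^{\aleph_0} < |X| < |Y|$, part~(i) yields one inclusion of these sets and part~(ii) the reverse, whence equality. I expect no difficulty here beyond bookkeeping, and I anticipate the singular-cardinal interpolation to be the only step demanding thought.
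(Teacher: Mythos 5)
Your proposal is correct and follows essentially the same route as the paper: part (i) is the $\alpha=0$ specialisation of Theorem~\ref{thm-different-cardinalities}(i), and part (ii) is obtained by interpolating the regular cardinal $(2^{\aleph_0})^+$, applying Theorem~\ref{thm-different-cardinalities}(ii) to descend from $|Y|$ to it, and then part (i) to climb back up to $|X|$. The paper simply runs this interpolation uniformly for all $|X|>2^{\aleph_0}$ (using $|X|\geq(2^{\aleph_0})^+$) rather than splitting into regular and singular cases as you do, but the argument is the same.
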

\begin{proof}
  Part (i) follows immediately from
  Theorem~\ref{thm-different-cardinalities}(i), when $\alpha = 0$. 

  For part (ii), it suffices to show that the regularity condition in part (ii)
  of Theorem~\ref{thm-different-cardinalities} can be removed. 
  Let $w_1, w_2, \ldots$ be a universal sequence for $\sym(Y)$, 
  let $\lambda$ denote the successor cardinal of $2 ^ {\aleph_0}$, and let $Z$
  be any set of cardinality $\lambda$. Then $\lambda$
  is a regular cardinal, and so Theorem~\ref{thm-different-cardinalities}(ii)
  implies that $w_1, w_2, \ldots$ is universal for $\sym(Z)$. Therefore
  since $|X| \geq \lambda = |Z|$, it follows from part (i) that $w_1, w_2,
  \ldots$ is universal for $\sym(X)$. 
\end{proof}

The proof of the next corollary is analogous to that of
Corollary~\ref{cor-universal-cardinlity-sym}, if $\alpha = |Y|$ and we observe
that $I(X, \alpha) = I(X)$ and $I(Y, \alpha) = I(Y)$.

\begin{cor}\label{cor-universal-cardinlity-sym-inv}
  Let $X$ and $Y$ be infinite sets such that $|X| < |Y|$. Then the following
  hold:
  \begin{enumerate}[\rm (i)]
    \item 
      every sequence that is universal for $I(X)$
      is universal for $I(Y)$;
    
    \item 
      if $2^{\aleph_0} < |X|$, then every sequence that is universal 
      for $I(Y)$ is universal for $I(X)$.
  \end{enumerate}
  In particular, if $2^{\aleph_0} < |X| \leq |Y|$, then the universal sequences
  for $I(X)$ coincide with those for $I(Y)$.
\end{cor}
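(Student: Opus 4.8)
The plan is to specialise Theorem~\ref{thm-different-cardinalities} to the value $\alpha = |Y|$, exactly as the proof of Corollary~\ref{cor-universal-cardinlity-sym} specialises it to $\alpha = 0$. The key observation, indicated in the remark preceding the statement, is that with this choice of $\alpha$ one has $I(Y, \alpha) = I(Y)$, since $\alpha = |Y| \geq |Y|$, and likewise $I(X, \alpha) = I(X)$ and $I(Z, \alpha) = I(Z)$ for every set $Z$ with $|Z| \leq |Y|$, since then $\alpha \geq |Z|$. Thus every symmetric inverse monoid appearing in the argument is realised as some $I(\cdot, \alpha)$ with a single fixed $\alpha$, which is precisely what lets the two parts of Theorem~\ref{thm-different-cardinalities} be applied directly.

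For part (i), I would apply Theorem~\ref{thm-different-cardinalities}(i) with $\alpha = |Y|$. The hypotheses $\aleph_0 \leq |X| < |Y|$ and $\alpha = |Y| \in \{0, |Y|\}$ hold immediately, so every sequence that is universal for $I(X, |Y|) = I(X)$ is also universal for $I(Y, |Y|) = I(Y)$, which is the assertion of (i).

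For part (ii), I would mirror the removal of the regularity hypothesis carried out in Corollary~\ref{cor-universal-cardinlity-sym}(ii). Let $\lambda$ be the successor cardinal of $2^{\aleph_0}$ and let $Z$ be a set with $|Z| = \lambda$; then $\lambda$ is regular, and since $2^{\aleph_0} < |X| < |Y|$ we have $2^{\aleph_0} < |Z| = \lambda \leq |X| < |Y|$. Applying Theorem~\ref{thm-different-cardinalities}(ii) with $Z$ in the role of $X$ and with $\alpha = |Y|$ --- so that the side condition ``$\alpha \geq |Y|$'' is satisfied and $|Z|$ is regular --- shows that any sequence universal for $I(Y) = I(Y, |Y|)$ is universal for $I(Z) = I(Z, |Y|)$. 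Finally, since $|Z| = \lambda \leq |X|$, I would invoke part (i) when $|Z| < |X|$, or the fact that universal sequences are preserved under isomorphism together with $I(Z) \cong I(X)$ when $|Z| = |X|$, to conclude that the sequence is universal for $I(X)$.

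I expect no genuine obstacle here, as the substantive work has already been done in Theorem~\ref{thm-different-cardinalities} and in Corollary~\ref{cor-universal-cardinlity-sym}. The only point requiring care is verifying that the single substitution $\alpha = |Y|$ simultaneously yields the structural identities $I(\cdot, \alpha) = I(\cdot)$ for every set involved and satisfies the side condition ``$\alpha < |X|$ or $\alpha \geq |Y|$'' of Theorem~\ref{thm-different-cardinalities}(ii); both are straightforward consequences of the strict inequality $|X| < |Y|$ and the bound $\lambda \leq |X|$.
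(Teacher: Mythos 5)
Your proposal is correct and follows essentially the same route as the paper, which proves this corollary by setting $\alpha = |Y|$, observing $I(\cdot,\alpha) = I(\cdot)$ for all sets involved, and repeating the argument of Corollary~\ref{cor-universal-cardinlity-sym} (including the passage through the regular successor cardinal $\lambda$ of $2^{\aleph_0}$ to remove the regularity hypothesis). Your explicit handling of the case $|Z| = |X|$ and your check of the side condition $\alpha \geq |Y|$ are both sound.
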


\begin{question}\label{question:remove-ass}
  Can the assumption that $|X| > 2^{\aleph_0}$ be removed from
  Theorem~\ref{thm-different-cardinalities}(ii) and the corollaries following
  it? 
\end{question}


\section{A sufficient condition for the universality of sequences for
$X^X$}\label{sect-trans}

In this section, we give a sufficient condition for a sequence over a 2-letter
alphabet to be universal for $X ^ X$ for any infinite $X$.  This might be seen
as a small step towards obtaining a description of the set of all universal
sequences for $X ^ X$, if such a description exists; and towards resolving the
following open question, which was the original motivation behind the results
in this section.

\begin{question}
  Let $X$ and $Y$ be infinite sets.  Is the set of universal sequences for
  $X^X$ equal to the set of universal sequences for $Y^Y$?
\end{question}

Throughout this section, we denote by $A$ a fixed alphabet $\{a, b\}$.
Let $\mathbf{w} = (w_1, w_2, \ldots)$ be a sequence of elements of $A^+$, and
let $S$ be a submonoid of $A^*$ such that:
  \begin{enumerate}

    \item 
      \label{equation-def-s-ws-0} 
      if $w_n = s v u v s'$ where $s, s' \in S$, and $u, v \in A^*$,  then
      $v \in S$;

    \item
      \label{equation-def-s-ws-1} 
      if $w_m = svt$ and $w_n = t'vs'$, $m\not =n$, where $s, s' \in S$ and
      $t, t', v \in A^*$, then  $v \in S$;

  \end{enumerate}
  where $m, n\in \N$.  For every sequence $\mathbf{w}$ of elements of $A^+$
  there is at least one submonoid of $A^ *$ satisfying these conditions, namely
  $A^*$ itself.

We will show that for every sequence $\mathbf{w}$ in $A ^ +$ there exists a
least submonoid of $A ^ *$ with respect to containment satisfying
\eqref{equation-def-s-ws-0} and \eqref{equation-def-s-ws-1}. It can be shown
that an arbitrary intersection of submonoids satisfying these three conditions,
also satisfies the conditions. However, we opt instead to give a construction
of this least submonoid, which we will make use of later.

We define $S_0 = \{ \varepsilon \}$ where $\varepsilon$ denotes the
empty word, which is the identity element of $A ^ *$.  For some $n\geq 0$, 
suppose that we have defined a submonoid $S_n$ of $A^*$. Define 
\begin{eqnarray*}
  X_n & = & \{ v \in A ^ *: w_i = s v u v s' \text{ for some } i \in \N, s, s'
  \in S_n \text{ and } u \in A^* \}; \\
  Y_n & = & \{v \in A ^ *:   w_i = s v t, w_j = t' v s' \text{ for some distinct }
  i, j \in \N, s, s' \in S_n \text{ and }  t, t' \in A^* \}. 
\end{eqnarray*}
and set $S_{n + 1} = \langle S_n, X_n, Y_n \rangle$.
We define $S_{\mathbf{w}} = \bigcup_{n \in \N} S_n$. Since 
$S_0 \leq S_1 \leq S_2 \leq \ldots$ by definition, 
$S_{\mathbf{w}}$ is a submonoid of $A^*$.

The next proposition is a straightforward consequence of the construction of
$S_{\mathbf{w}}$.

\begin{prop}
  Let $\mathbf{w} = (w_1, w_2, \ldots)$ be an arbitrary sequence of elements of
  $A^+$. Then $S_{\mathbf{w}}$ is the least submonoid of $A^*$ satisfying
  conditions \eqref{equation-def-s-ws-0} and \eqref{equation-def-s-ws-1}.
\end{prop}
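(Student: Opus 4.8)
The plan is to prove two things: first that $S_{\mathbf{w}}$ genuinely satisfies conditions \eqref{equation-def-s-ws-0} and \eqref{equation-def-s-ws-1}, and second that it is contained in every submonoid satisfying these conditions, hence is the least such. The second part is the easy direction and I would dispatch it first by induction on $n$: suppose $T$ is any submonoid of $A^*$ satisfying the two conditions. I claim $S_n \subseteq T$ for all $n$. The base case $S_0 = \{\varepsilon\} \subseteq T$ holds since $T$ is a submonoid. For the inductive step, assume $S_n \subseteq T$. Any $v \in X_n$ arises as $w_i = svuvs'$ with $s, s' \in S_n \subseteq T$, so applying condition \eqref{equation-def-s-ws-0} to $T$ gives $v \in T$; similarly any $v \in Y_n$ lands in $T$ via condition \eqref{equation-def-s-ws-1}. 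Thus $S_n \cup X_n \cup Y_n \subseteq T$, and since $T$ is a submonoid, $S_{n+1} = \langle S_n, X_n, Y_n\rangle \subseteq T$. Taking the union over $n$ yields $S_{\mathbf{w}} \subseteq T$.

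The first part — that $S_{\mathbf{w}}$ itself satisfies the conditions — is where the real work lies, and I expect it to be the main obstacle. The subtlety is that the conditions quantify over $s, s' \in S_{\mathbf{w}}$, but the defining sets $X_n, Y_n$ only close up using $s, s'$ from the \emph{previous} stage $S_n$. So the danger is that a witnessing factorisation $w_i = svuvs'$ uses elements $s, s'$ that appear only at a high stage, while $v$ itself never gets adjoined. The key observation that resolves this is that $S_{\mathbf{w}} = \bigcup_n S_n$ is a \emph{directed} union (indeed an increasing chain $S_0 \leq S_1 \leq \cdots$), so any finite collection of elements of $S_{\mathbf{w}}$ — in particular any pair $s, s'$ — already lies in a common $S_n$ for some sufficiently large $n$.

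Concretely, to verify \eqref{equation-def-s-ws-0}, suppose $w_i = svuvs'$ with $s, s' \in S_{\mathbf{w}}$ and $u, v \in A^*$. Since $s, s' \in \bigcup_n S_n$ and the chain is increasing, choose $n$ large enough that both $s, s' \in S_n$. Then by the very definition of $X_n$ we have $v \in X_n \subseteq S_{n+1} \subseteq S_{\mathbf{w}}$, as required. The verification of \eqref{equation-def-s-ws-1} is entirely parallel: given $w_m = svt$ and $w_n = t'vs'$ with $m \neq n$ and $s, s' \in S_{\mathbf{w}}$, pick $k$ with $s, s' \in S_k$, whence $v \in Y_k \subseteq S_{k+1} \subseteq S_{\mathbf{w}}$.

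Combining the two parts completes the proof: $S_{\mathbf{w}}$ satisfies both conditions and is contained in every submonoid that does, so it is the least such submonoid. I would present the directedness of the chain as the pivotal lemma, since it is exactly what bridges the gap between the stagewise construction and the global quantification in the conditions; once that is isolated, both verifications are immediate, and I would remark that the same argument shows more generally that an arbitrary intersection of submonoids satisfying the conditions again satisfies them, which is the alternative characterisation alluded to in the text.
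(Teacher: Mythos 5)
Your proof is correct and is precisely the straightforward argument the paper has in mind (the paper omits the proof entirely, calling the proposition a direct consequence of the construction). Both halves — the inductive containment $S_n \subseteq T$ and the use of the increasing chain to place any pair $s, s' \in S_{\mathbf{w}}$ into a common stage $S_n$ — are exactly right.
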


The main result of this section is the following.

\begin{thm}\label{thm-universal-sequences}
  Let $\mathbf{w} = (w_1, w_2, \ldots)$ be a sequence of words in $A ^ +$ such
  that there are no $s, t, v \in A^*$ such that $w_n = stv$ with $st, tv \in
  S_{\mathbf{w}}$ for all $n \in \N$. Let $p_n, s_n, u_n \in A^*$ be such that
  $w_n = p_n u_n s_n$, and $p_n$ and $s_n$ are respectively the longest prefix
  and the longest suffix of $w_n$ so that $p_n, s_n \in S_{\mathbf{w}}$.
  Suppose that $u_n$ is a subword of $w_m$ if and only if $n = m$ and that
  $u_n$ is not a subword of $p_n$ for all $n$. Then $(w_1, w_2, \ldots)$ is a
  universal sequence for $X^X$, where $X$ is any infinite set.
\end{thm}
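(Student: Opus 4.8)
The plan is to construct the two transformations $\alpha = (a)\phi$ and $\beta = (b)\phi$ of $X$ directly and to verify that reading each word $w_n$ realises the prescribed function. Fix an arbitrary sequence $f_1, f_2, \ldots \in X^X$; we must produce $\alpha,\beta\in X^X$ with $(x)(w_n)\phi = (x)f_n$ for every $x\in X$ and every $n$, where $\phi\colon A^+\to X^X$ is the homomorphism determined by $a\mapsto\alpha$ and $b\mapsto\beta$. It is convenient to picture $\alpha,\beta$ as a deterministic edge-labelling of a digraph on the vertex set $X$: an $a$-edge from $y$ to $(y)\alpha$ and a $b$-edge from $y$ to $(y)\beta$. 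Realising $f_n$ then amounts to laying down, for each $x\in X$, a directed path starting at $x$, spelling the word $w_n$, and terminating at $(x)f_n$; the transformations $\alpha,\beta$ are well defined precisely when no vertex is ever assigned two distinct out-neighbours along the same letter. Since $X$ is infinite we may fix at the outset a decomposition of $X$ that supplies, for each $n$, a reservoir of $|X|$ fresh points on which to route the ``core'' of $w_n$; the total number of auxiliary points required is at most $|X|\cdot\aleph_0=|X|$.

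I would organise every path according to the canonical factorisation $w_n=p_nu_ns_n$ furnished by the hypotheses, reading $w_n$ in three phases: a prefix phase spelling $p_n\in S_{\mathbf w}$, a core phase spelling $u_n$, and a suffix phase spelling $s_n\in S_{\mathbf w}$. The design principle is that the members of $S_{\mathbf w}$ act on a common ``scaffold'': all prefix phases (and, dually, all suffix phases) share trajectories whenever the words literally agree, and they are allowed to merge, because two functions $\alpha,\beta$ can always diverge at a shared vertex by using different letters. By contrast, the core phase of $w_n$ is routed through points reserved for $n$ and the starting point $x$, so that the core transports the point reached at the end of $p_n$ to the point from which $s_n$ is read, and the net effect of the three phases is to send $x$ to $(x)f_n$.

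The hard part, and the place where all of the hypotheses are used, is to prove that this simultaneous system of paths is consistent, i.e. that the resulting partial edge-labelling is deterministic. A conflict would consist of a vertex reached by two readings that then demand different successors under the same letter. Conditions \eqref{equation-def-s-ws-0} and \eqref{equation-def-s-ws-1} defining $S_{\mathbf w}$ are exactly what guarantees that every overlap between a suffix of one reading and a prefix of another is itself an element of $S_{\mathbf w}$, so that such coincidences can only occur within the shared scaffold, where the continuations agree. The hypothesis that $u_n$ is a subword of $w_m$ only when $n=m$ ensures that a core segment of $w_n$ never coincides with any segment of a reading of $w_m$ for $m\neq n$, ruling out cross-word conflicts and, in particular, preventing the paths from forcing $(x)f_n=(x)f_m$; the hypothesis that $u_n$ is not a subword of $p_n$ rules out the core of $w_n$ reappearing inside its own prefix phase, which would create a self-conflict; and the non-factorisation hypothesis (no $w_n=stv$ with $st,tv\in S_{\mathbf w}$) guarantees that the prefix and suffix phases do not overlap inside $w_n$, so that the core phase is genuinely present and has room to carry out the transport realising $f_n$. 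I expect the bulk of the technical work to be a careful case analysis of two readings meeting at a common vertex, translating each potential clash into a statement about overlaps of the $w_i$ and then discharging it with one of these conditions.

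Once determinism is established, $\alpha$ and $\beta$ are well-defined partial functions; I would extend them to total functions on $X$ by sending every as-yet-unassigned vertex to a fixed basepoint, which does not disturb any of the paths laid down above. A final, routine verification traces, for each $n$ and each $x\in X$, the path spelling $w_n$ from $x$ and checks that it ends at $(x)f_n$, so that $(w_n)\phi=f_n$ and the sequence is universal for $X^X$.
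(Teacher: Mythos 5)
Your high-level picture --- factor $w_n = p_n u_n s_n$, let the $S_{\mathbf{w}}$-parts run on shared structure, and route the cores through reserved points --- has the same shape as the paper's construction, but two of your structural claims are false as stated, and they are exactly where the content of the proof lives. First, prefix phases cannot be ``allowed to merge'': if the readings of the \emph{same} word $w_n$ from two distinct starting points $x \neq x'$ ever arrive at the same vertex at the same position in the word, determinism forces the two paths to coincide from then on, hence $(x)f_n = (x')f_n$, which fails for arbitrary $f_n$. So the maps realising $p_n$ and $u_n$ must be injective, and arranging this is not free. The paper achieves it by taking $X$ to be a set of stacks whose bottom entry lies in the free group $F(A)$, so that every $v \in S_{\mathbf{w}}$ acts by the bijection $(\ldots,x_1,x_0) \mapsto (\ldots,x_1,x_0v)$ (Claim~\ref{claim-seq-4}); the same bijectivity is what allows the trigger map $\delta$ to hand control to $f_n \circ ((s_n)\Psi)^{-1}$ so that the subsequent suffix phase lands at $(x)f_n$. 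Your sketch supplies no mechanism for either the injectivity or the invertibility of the suffix action. Second, conditions \eqref{equation-def-s-ws-0} and \eqref{equation-def-s-ws-1} only tell you that an overlap lies in $S_{\mathbf{w}}$; they do not make ``the continuations agree'' afterwards. Converting membership in $S_{\mathbf{w}}$ into well-definedness of the dynamics is the bulk of the paper's work: one needs an irredundant generating set $T$ of $S_{\mathbf{w}}$ and the combinatorial facts that its generators do not overlap one another or the $u_n$ (Claims~\ref{claim-seq-0}--\ref{claim-seq-2}) before the collapsing map $\gamma$ and the trigger map $\delta$ are even well defined.

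There is also a concrete obstruction to the ``reservoir of fresh points for the core of $w_n$.'' Cores of distinct words can share long common prefixes: for $w_n = aba(ab)^{n+1}bab$ one has $u_1 = aababb$ and $u_2 = aabababb$, which agree on their first five letters, so the core paths of $w_1$ and $w_2$ from a common start necessarily share edges and cannot occupy disjoint reservoirs. Moreover no point of $X$ is genuinely fresh: every point is itself the source of a reading of every $w_m$ (each beginning with the letter $a$), so the out-edges of your reserved points are already constrained by all the other words. These interactions are precisely the ``careful case analysis'' you defer to the end, and deferring it leaves the argument without its substance. As it stands this is a plausible plan rather than a proof; to carry it out you would have to reintroduce, in graph language, the stack coordinate, the bijective action of $S_{\mathbf{w}}$, and the analysis of the generating set $T$ that the paper performs explicitly.
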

  
We note that the assumption on the sequence $\mathbf{w}$ in the above theorem
implies that $w_n \notin S_{\mathbf{w}}$ for all $n \in \N$. As a corollary to
Theorem~\ref{thm-universal-sequences} we obtain the following result.

\begin{cor}\label{cor-no-pre-suf-overlap}
  Let $X$ be an infinite set and let $w_1, w_2, \ldots \in A ^ +$ be such that
  no proper prefix of $w_n$ is a suffix of any $w_m$, and  $w_n$ is not a
  subword of $w_m$, $m\not = n$.  Then $(w_1, w_2, \ldots)$ is a universal
  sequence for $X^X$.
\end{cor}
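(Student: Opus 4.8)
The plan is to deduce the corollary directly from Theorem~\ref{thm-universal-sequences} by verifying that the two stated hypotheses force the least submonoid $S_{\mathbf{w}}$ to be trivial, after which the theorem's three conditions collapse to exactly the hypotheses we are given. The key step, and really the only one with content, is the claim that $S_{\mathbf{w}} = \{\varepsilon\}$. Since $S_{\mathbf{w}}$ is by the preceding proposition the least submonoid of $A^*$ satisfying \eqref{equation-def-s-ws-0} and \eqref{equation-def-s-ws-1}, I would establish this by showing that the trivial submonoid $\{\varepsilon\}$ itself satisfies both conditions; minimality then gives $S_{\mathbf{w}} \subseteq \{\varepsilon\}$, with equality because $\varepsilon \in S_{\mathbf{w}}$.

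To check \eqref{equation-def-s-ws-0} for $\{\varepsilon\}$, I would take a factorisation $w_n = svuvs'$ with $s, s' \in \{\varepsilon\}$, so that $w_n = vuv$, and argue $v = \varepsilon$. Here $v$ is simultaneously a prefix and a suffix of $w_n$; a length comparison ($|w_n| = 2|v| + |u|$) rules out $v = w_n$, so if $v \neq \varepsilon$ then $v$ is a nonempty proper prefix of $w_n$ that is also a suffix of $w_n$, contradicting the first hypothesis in the diagonal case $m = n$. To check \eqref{equation-def-s-ws-1}, I would take $w_m = svt$ and $w_n = t'vs'$ with $m \neq n$ and $s, s' \in \{\varepsilon\}$, so that $w_m = vt$ and $w_n = t'v$, and again argue $v = \varepsilon$: now $v$ is a prefix of $w_m$ and a suffix of $w_n$, and if $v = w_m$ then $w_m$ is a suffix, hence a subword, of $w_n$ with $m \neq n$, contradicting the no-subword hypothesis, while if $\varepsilon \neq v \neq w_m$ then $v$ is a nonempty proper prefix of $w_m$ that is a suffix of $w_n$, again contradicting the first hypothesis. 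This gives $S_{\mathbf{w}} = \{\varepsilon\}$.

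With $S_{\mathbf{w}} = \{\varepsilon\}$ the remaining checks are routine. Since each $w_n \in A^+$ is nonempty, no factorisation $w_n = stv$ can have both $st, tv \in \{\varepsilon\}$, so the hypothesis on $\mathbf{w}$ in Theorem~\ref{thm-universal-sequences} holds. The only prefix or suffix of $w_n$ lying in $\{\varepsilon\}$ is $\varepsilon$, so $p_n = s_n = \varepsilon$ and hence $u_n = w_n$. The theorem's condition that $u_n$ be a subword of $w_m$ if and only if $n = m$ then reads "$w_n$ is a subword of $w_m$ exactly when $n = m$", whose forward direction is the trivial fact that $w_n$ is a subword of itself and whose converse is precisely the corollary's no-subword hypothesis. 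Finally $u_n = w_n$ is nonempty and so is not a subword of $p_n = \varepsilon$. Thus all hypotheses of Theorem~\ref{thm-universal-sequences} are met and it yields that $(w_1, w_2, \ldots)$ is universal for $X^X$.

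I do not expect a serious obstacle here: the whole argument is the reduction $S_{\mathbf{w}} = \{\varepsilon\}$, and that is in turn bookkeeping with the unbordered-prefix and no-subword hypotheses. The single point demanding care is the reading of "proper prefix": it must mean a nonempty prefix strictly shorter than the word, since $\varepsilon$ is a suffix of every word and would otherwise make the first hypothesis vacuously false; and one must remember that this hypothesis is genuinely needed in the diagonal case $m = n$ in order to dispatch condition \eqref{equation-def-s-ws-0}.
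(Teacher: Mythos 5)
Your proof is correct and follows essentially the same route as the paper: both reduce the corollary to Theorem~\ref{thm-universal-sequences} by showing $S_{\mathbf{w}}=\{\varepsilon\}$, the paper by observing $X_0=Y_0=\{\varepsilon\}$ in the explicit construction and you by verifying that $\{\varepsilon\}$ satisfies conditions \eqref{equation-def-s-ws-0} and \eqref{equation-def-s-ws-1} and invoking minimality, which amounts to the same check. Your version spells out the details (including the reading of ``proper prefix'' and the case $m=n$) that the paper leaves implicit.
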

\begin{proof}
  It follows from the construction of $S_{\mathbf{w}}$ where $\mathbf{w} =
  (w_1, w_2, \ldots)$, that $X_0 = Y_0 = \{ \varepsilon\}$.  Hence
  $S_{\mathbf{w}} = \{ \varepsilon \}$, and so we are done by
  Theorem~\ref{thm-universal-sequences}. 
\end{proof}

  Two examples of sequences satisfying the hypothesis of
  Corollary~\ref{cor-no-pre-suf-overlap} are $(aba ^ {n + 1}b ^ 2)_{n\in \N}$
  and $(a ^ 2 b ^ 3 (abab ^ 3) ^ {n + 1} ab ^ 2 ab ^ 3)_{n\in \N}$ of Banach
  and Sierpi\'nski mentioned in the introduction.  There are further sequences
  satisfying the hypothesis of Theorem~\ref{thm-universal-sequences} but not
  that of Corollary~\ref{cor-no-pre-suf-overlap}. For example,  it can be shown
  that if $w_n = aba(ab)^{n+1}bab \in A^+$ for all $n \in \N$, then $(w_1, w_2,
  \ldots)$ satisfies the hypothesis of Theorem~\ref{thm-universal-sequences},
  even though $ab$ is both a prefix and a suffix. In fact, $X_0 = Y_0 =
  \{\varepsilon, ab\}$ as each word contains $a^2$ exactly once, thus no prefix
  with more than $4$ letters can be a suffix. Then $S_1 = \langle ab \rangle$
  and again for the same reason as above $X_1 = Y_1 = \{\varepsilon, ab\}$ .
  Hence $S_{\mathbf{w}} = \langle ab \rangle$ and the hypothesis of
  Theorem~\ref{thm-universal-sequences} can be easily verified.

Before presenting the proof of Theorem~\ref{thm-universal-sequences} we prove a
technical result about $S_{\mathbf{w}}$.

\begin{lem}\label{lem-universal-sequences-ab}
  Let $\mathbf{w} = (w_1, w_2, \ldots)$ be an arbitrary sequence of elements of
  $A^+$ such that $a, b \notin S_{\mathbf{w}}$. Then either $w_1, w_2, \ldots
  \in aA^*b$ and $S_{\mathbf{w}} \subseteq aA^*b \cup \{\varepsilon \}$; or
  $w_1, w_2,\ldots \in bA^*a$ and $S_{\mathbf{w}} \subseteq bA^*a \cup
  \{\varepsilon \}$.
\end{lem}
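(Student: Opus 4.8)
The plan is to split the statement into two tasks: first, to fix a common orientation for the words $w_n$ themselves, and second, to propagate this orientation to every element of $S_{\mathbf{w}}$ by induction along the chain $S_0 \leq S_1 \leq \cdots$. The hypothesis will enter only through the observation that, since $X_n, Y_n \subseteq S_{n+1} \subseteq S_{\mathbf{w}}$ and $a, b \notin S_{\mathbf{w}}$, we have $a, b \notin X_n \cup Y_n$ for every $n$. Up to interchanging the letters $a$ and $b$ (an automorphism of $A^*$ preserving the whole construction), it suffices to treat one of the two alternatives, say $w_1, w_2, \ldots \in aA^*b$.

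For the first task I would analyse $X_0$ and $Y_0$. From $a, b \notin X_0$ no word of length at least two begins and ends with the same letter, so every $w_n$ of length $\geq 2$ lies in $aA^*b$ or in $bA^*a$. From $a \notin Y_0$ there are no distinct indices $i \neq j$ with $w_i$ beginning with $a$ and $w_j$ ending with $a$, and symmetrically from $b \notin Y_0$. Writing $P = \{i : w_i \text{ begins with } a\}$ and $L = \{i : w_i \text{ ends with } a\}$, I would rule out the degenerate case $P = L = \{k\}$ (which, the sequence being infinite, would force infinitely many one-letter words equal to $b$ and hence $b \in Y_0$), and conclude that $P = \varnothing$ or $L = \varnothing$. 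Combined with the corresponding statement for $b$, this forces all words into the single class $aA^*b$; in particular no word has length one.

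For the second task I would prove by induction on $n$ that $S_n \subseteq aA^*b \cup \{\varepsilon\}$. The base case $S_0 = \{\varepsilon\}$ is immediate, and the passage from the generators to $S_{n+1} = \langle S_n, X_n, Y_n \rangle$ is routine, because $aA^*b \cup \{\varepsilon\}$ is a submonoid of $A^*$: a nonempty product of words each beginning with $a$ and ending with $b$ again begins with $a$ and ends with $b$. Everything therefore reduces to showing $X_n \cup Y_n \subseteq aA^*b \cup \{\varepsilon\}$ under the inductive assumption $S_n \subseteq aA^*b \cup \{\varepsilon\}$.

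The crux, and the step I expect to be the main obstacle, is excluding a factor $v$ of the wrong orientation; note that the purely formal closure property fails in general, so the hypothesis $a, b \notin X_n$ must be used here, via a single-letter extraction. Suppose $v \in X_n$ begins with $b$, so $w_i = svuvs'$ with $s, s' \in S_n$. Since $w_i \in aA^*b$ begins with $a$ while $v$ begins with $b$, the prefix $s$ cannot be empty, so $s \in S_n$ is a genuine prefix and the letter just after it is $b$; moreover $w_i$ ends with $b$. As $|w_i| \geq |s| + 2|v| \geq |s| + 2$, these two occurrences of $b$ sit at distinct positions, whence $w_i = s\, b\, \mu\, b\, \varepsilon$ with $s, \varepsilon \in S_n$ exhibits $b \in X_n$, a contradiction. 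The dual argument, using that $w_i$ begins with $a$ and that $s' \neq \varepsilon$ when $v$ ends with $a$, excludes $v$ ending with $a$; hence $X_n \subseteq aA^*b \cup \{\varepsilon\}$. For $Y_n$ one runs the same extraction but draws the two boundary letters from the two witnessing words: a one-letter $v$ is excluded at once (it would place $a$ or $b$ in $S_{\mathbf{w}}$), so $|v| \geq 2$; if $v$ begins with $b$ apply the extraction to $w_i = svt$, and if $v$ ends with $a$ apply it to $w_j = t'vs'$, the condition $|v| \geq 2$ again guaranteeing distinct positions. This closes the induction, giving $S_{\mathbf{w}} = \bigcup_n S_n \subseteq aA^*b \cup \{\varepsilon\}$, which together with the first task yields the first alternative; the second follows by the $a \leftrightarrow b$ symmetry.
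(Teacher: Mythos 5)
Your proposal is correct and follows essentially the same route as the paper's proof: first fix a common orientation for all the $w_n$ by extracting a single letter into $S_{\mathbf{w}}$ via conditions \eqref{equation-def-s-ws-0} and \eqref{equation-def-s-ws-1}, then show inductively that each $X_n\cup Y_n$ lies in $aA^*b\cup\{\varepsilon\}$ by the analogous single-letter extraction (you pair the offending boundary letter of $v$ with the opposite end of the witnessing word, the paper with the same end and the neighbouring $S_n$-factor, which is just the mirror of the same idea). Your explicit treatment of the degenerate one-letter case in the first step is slightly more careful than the paper's, but the argument is the same.
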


\begin{proof}
  We begin by showing that $w_n \in a A ^* b$ for all $n\in \N$ or $w_n \in b A
  ^* a$ for all $n\in \N$. Suppose that $w_m \in a A ^*$ and $w_n\in A ^* a$
  for some $m,n\in\N$. Then, by conditions~\eqref{equation-def-s-ws-0}
  and~\eqref{equation-def-s-ws-1}, $a\in S_{\mathbf{w}}$, which contradicts the
  assumption of the lemma. Hence if there exists $m\in \N$ such that $w_m \in a
  A ^*$, then $w_n \in A ^*b$ for all $n\in \N$. Similarly, if $w_m\in b A ^*$,
  then $w_n\in A ^*a$ for all $n\in \N$. Hence together these imply that  $w_n
  \in a A ^* b$ for all $n\in \N$ or $w_n \in b A ^* a$ for all $n\in \N$, as
  required. Assume without loss of generality that $w_n \in a A ^* b$ for all
  $n\in \N$.  Since $S_0 = \{ \varepsilon \}$, it suffices to show that
  $X_n\cup Y_n \subseteq  aA^*b \cup \{\varepsilon \}$ for all $n \geq 0$.
  Suppose that $n \geq 0$ is arbitrary. 

  If $x\in X_n$, then there exists $m\in \N$ such that $w_m = s x u x s'$ for
  some $s, s' \in S_n$ and $u \in A^*$. If $x \in A^* a$, then since $w_m
  \in aA^*b$ there exists $q\in A ^*$ such that $w_m = a q a s'$.  Hence $a \in
  S_{\mathbf{w}}$ by \eqref{equation-def-s-ws-0}, a contradiction. Hence $x\in
  A^* b \cup \{\varepsilon\}$, and, by symmetry, $x\in a A ^* \cup \{
  \varepsilon \}$, as required. Suppose that $y\in Y_n$. Then there exist
  distinct $m, k \in \N$ such that $w_m = s y t = aq$ and $w_k = t' y s'$ where
  $s, s' \in S_n$ and $q, t, t' \in A^*$. If $y \in A^*a$, then $w_k = q'as'$
  for some $q'\in A ^ *$ and so $a\in S_{\mathbf{w}}$ by
  \eqref{equation-def-s-ws-1}, a contradiction. Hence $y \in A ^* b \cup \{
  \varepsilon\}$ and by symmetry $y\in a A^* \cup \{ \varepsilon \}$. 
\end{proof}

\begin{lem}
  Let $\mathbf{w} = (w_1, w_2, \ldots)$ be a sequence of words in $A^+$ such
  that there are no $s, t, v \in A^*$ such that $w_n = stv$ with $st, tv \in
  S_{\mathbf{w}}$ for all $n \in \N$. Then there exsits $u_n \in A^+$ such that
  $w_n = p_n u_n s_n$ where  $p_n$ and $s_n$ are the longest prefix and suffix,
  respectively, of $w_n$ belonging to $S_{\mathbf{w}}$, for all $n\in \N$. 
\end{lem}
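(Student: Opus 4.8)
The plan is to fix $n$ and prove the slightly stronger statement that the longest prefix $p_n \in S_{\mathbf{w}}$ and the longest suffix $s_n \in S_{\mathbf{w}}$ of $w_n$ occupy disjoint portions of $w_n$ separated by at least one letter; equivalently, $|p_n| + |s_n| < |w_n|$. The middle factor $u_n$ determined by $w_n = p_n u_n s_n$ is then automatically nonempty, which is exactly the conclusion $u_n \in A^+$.

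First I would note that $p_n$ and $s_n$ are genuinely defined: since $S_{\mathbf{w}}$ is a submonoid it contains $\varepsilon$, which is both a prefix and a suffix of $w_n$, so among the finitely many prefixes (respectively suffixes) of $w_n$ lying in $S_{\mathbf{w}}$ there is a longest one. The heart of the argument is then a proof by contradiction. Assuming $|p_n| + |s_n| \geq |w_n|$, I would write $w_n = a_1 \cdots a_L$ with $p_n = a_1 \cdots a_k$ and $s_n = a_{L - m + 1} \cdots a_L$, and split $w_n = s t v$ where $s = a_1 \cdots a_{L-m}$, the block $t = a_{L-m+1} \cdots a_k$ records the (possibly empty) overlap, and $v = a_{k+1} \cdots a_L$. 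The inequality $k + m \geq L$ is precisely what guarantees these factors make sense, namely $L - m \leq k$. By construction $st = p_n \in S_{\mathbf{w}}$ and $tv = s_n \in S_{\mathbf{w}}$, contradicting the hypothesis that $w_n$ admits no factorization $w_n = stv$ with $st, tv \in S_{\mathbf{w}}$. A pleasant feature is that this single factorization handles uniformly both the touching case $k + m = L$ (where $t = \varepsilon$ and $w_n = p_n s_n$) and the strictly overlapping case $k + m > L$ (where $t$ is the nonempty shared block).

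I do not anticipate a genuine obstacle; the only point needing a moment's care is that $s$, $t$, $v$ are read off as literal factors of the single word $w_n$, so the letters shared between the prefix and the suffix are forced to coincide and $st$, $tv$ really do reproduce $p_n$, $s_n$ respectively, with no independent matching condition required. Once the contradiction is reached we have $|p_n| + |s_n| < |w_n|$, so that $u_n = a_{k+1} \cdots a_{L-m}$ is a nonempty word with $w_n = p_n u_n s_n$, completing the proof.
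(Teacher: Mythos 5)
Your argument is correct and is essentially the paper's own proof: the paper likewise observes that $|p_n|+|s_n|\geq|w_n|$ would yield a factorization $w_n=stv$ with $p_n=st$ and $s_n=tv$, contradicting the hypothesis, so the middle factor $u_n$ must be nonempty. You simply spell out the index bookkeeping for the overlap more explicitly.
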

 \begin{proof} 
   If sum of the lengths of $s_n$ and $p_n$ is bigger or equal to the length of
   $w_n$, then there exists  $s, t, v\in A ^*$ such that $w_n = s t v$, $p_n =
   s t$, and $s_n = t v$, which is a contradiction. Otherwise there is $u_n \in
   A^+$ as required.
\end{proof}

\begin{proof}[Proof of Theorem~\ref{thm-universal-sequences}]
  First suppose that $a \in S_{\mathbf{w}}$. We consider three cases: there is
  $n\in \N$ such that $b$ does not appear in $w_n$; $b$ appears at least twice
  in at least one $w_n$; and for all $n \in \N$ the letter $b$ appears exactly
  once in $w_n$. In the first case, $w_n = a^i \in S_{\mathbf{w}}$ for some $i \geq 1$,
  a contradiction. In the second case, $w_n = a^ibuba^j$ for some $i,j \geq 0$
  and some $u \in A^*$. Then $b \in S_{\mathbf{w}}$ by
  \eqref{equation-def-s-ws-0}, and so $S_{\mathbf{w}} = A^*$, a contradiction.
  In the final case, $w_n = a^{i_n} b a^{j_n}$ for some $i_n, j_n \geq 0$ and
  all $n \in \N$. Then $b \in S_{\mathbf{w}}$ by \eqref{equation-def-s-ws-1},
  again a contradiction. Therefore $a \notin S_{\mathbf{w}}$ and the symmetric
  argument shows that $b \notin S_{\mathbf{w}}$. For the rest of the proof we
  assume that $a, b \notin S_{\mathbf{w}}$. By
  Lemma~\ref{lem-universal-sequences-ab} we may assume that $w_1, w_2, \ldots
  \in aA^*b$ and $S_{\mathbf{w}} \subseteq aA^*b \cup \{\varepsilon\}$.

  Denote by $F(A)$ the free group with $A$ being the set of generators. Let $Y$
  be any set such that $|Y| = |X|$. Since $F(A)$ is countable and $Y$ is
  infinite, we may assume that $X$ is the set of eventually constant sequences
  over $F(A) \cup Y$ such that the first element is in $F(A)$. For convenience
  write the sequences from right to left, namely
  \begin{align*}
    X = \{(\ldots, x_1, x_0) : \ & x_0 \in F(A), \,  x_i \in F(A) \cup Y \,
    \text{for} \, i \geq 1, \, \text{and there is} \, K \in \N \\
    &\text{such that} \, x_K = x_k \, \text{for all} \, k \geq K\} .
  \end{align*}
  We proceed by proving a series of claims.

  \begin{claim}\label{claim-seq--1}
    $u_n \in a A^* b$ for all $n \in \N$.
  \end{claim}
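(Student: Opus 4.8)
The plan is to prove Claim~\ref{claim-seq--1} by showing that $u_n$ begins with $a$ and ends with $b$, where $u_n$ is the middle factor of the decomposition $w_n = p_n u_n s_n$ provided by the preceding lemma. Recall that $p_n$ and $s_n$ are the \emph{longest} prefix and suffix of $w_n$ lying in $S_{\mathbf{w}}$, and by the hypothesis of the theorem together with that lemma, $u_n \in A^+$ is non-empty.

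\smallskip
First I would recall from Lemma~\ref{lem-universal-sequences-ab} and the standing assumptions of this case that $w_n \in aA^*b$ and $S_{\mathbf{w}} \subseteq aA^*b \cup \{\varepsilon\}$ for all $n$. The key consequence is that every nonempty element of $S_{\mathbf{w}}$ begins with $a$ and ends with $b$. I would then argue by contradiction about the first letter of $u_n$. Suppose $u_n$ begins with the letter $b$ rather than $a$. Since $w_n = p_n u_n s_n \in aA^*b$ and $u_n$ is nonempty, $p_n$ cannot be empty (otherwise $w_n = u_n s_n$ would begin with $b$), so $p_n \in aA^*b$ and $p_n$ ends in $b$. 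Now consider the prefix $p_n' := p_n u_n'$, where $u_n'$ is the maximal initial block of $b$'s in $u_n$; I claim this longer prefix would also lie in $S_{\mathbf{w}}$, contradicting the maximality of $p_n$. The natural mechanism is condition~\eqref{equation-def-s-ws-0} or the construction of $S_{\mathbf{w}}$: since $p_n$ ends in $b$ and $s_n$ begins with $a$ (being a nonempty suffix, hence in $aA^*b$), the boundary between $p_n$ and $u_n$ is a $b$-to-$b$ junction while the $u_n$--$s_n$ boundary is a letter-to-$a$ junction, and one leverages the defining closure properties to absorb the leading $b$-block of $u_n$ into the surrounding monoid element.

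\smallskip
The dual argument handles the last letter of $u_n$: if $u_n$ ended in $a$, then the maximal terminal block of $a$'s could be absorbed into $s_n$ to produce a longer suffix in $S_{\mathbf{w}}$, again contradicting maximality of $s_n$. Combining both, $u_n$ begins with $a$ and ends with $b$, which is exactly $u_n \in aA^*b$.

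\smallskip
The hard part will be making the absorption step rigorous, namely showing that the extended prefix (or suffix) genuinely lies in $S_{\mathbf{w}}$ rather than merely being consistent with the conditions. Conditions~\eqref{equation-def-s-ws-0} and~\eqref{equation-def-s-ws-1} are \emph{closure} properties that force certain factors \emph{into} $S_{\mathbf{w}}$, so I expect to use them in the forward direction: I would locate the appropriate decomposition of some $w_i$ (possibly $w_n$ itself, possibly another word sharing the factor) exhibiting the candidate block $v$ sandwiched as in $svuvs'$ or straddling two words as in $svt$, $t'vs'$, thereby concluding $v \in S_{\mathbf{w}}$ and then that $p_n v$ or $v s_n$ lies in $S_{\mathbf{w}}$ by its being a submonoid. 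Care is needed because the letter $b$ may occur only once in some $w_n$ (as in the degenerate $a^i b a^j$ analysis already ruled out), so I must confirm the relevant factors are nonempty and that the junction letters are as claimed; this is where the assumption $a, b \notin S_{\mathbf{w}}$ and the structure $S_{\mathbf{w}} \subseteq aA^*b \cup \{\varepsilon\}$ do the essential work in excluding the bad cases.
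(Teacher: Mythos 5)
Your overall strategy (a contradiction extracted from the closure conditions) is the right one, but the decisive step is left as an acknowledged ``hard part,'' and the specific mechanism you sketch does not go through as stated. You propose to take $v$ to be the maximal initial block of $b$'s of $u_n$, prove $v \in S_{\mathbf{w}}$, and then contradict the maximality of $p_n$ via $p_n v \in S_{\mathbf{w}}$. There are two problems. First, to get $v = b^k \in S_{\mathbf{w}}$ from condition~\eqref{equation-def-s-ws-1} you need a second word $w_m = t' v s'$ with $s' \in S_{\mathbf{w}}$, i.e.\ an occurrence of $b^k$ in $w_m$ immediately followed by an element of $S_{\mathbf{w}}$; all you know about $w_m$ is that it ends in a single $b$, so for $k \geq 2$ such a decomposition need not exist, and condition~\eqref{equation-def-s-ws-0} likewise requires two occurrences of $v$ inside one word, which you have not produced. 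Second, the detour through the maximality of $p_n$ is both unnecessary and self-undermining: if you ever succeed in showing $b^k \in S_{\mathbf{w}}$ for some $k \geq 1$, you are already done, since $b^k \notin aA^*b \cup \{\varepsilon\} \supseteq S_{\mathbf{w}}$ by Lemma~\ref{lem-universal-sequences-ab} (for $k = 1$ this is just the standing assumption $b \notin S_{\mathbf{w}}$); there is no need to absorb anything into $p_n$.

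The paper's proof is a two-line instance of exactly the tool you name, applied to the single letter $v = b$ rather than to the whole block. If $u_n = bu$, then $w_n = p_n \cdot b \cdot (u s_n)$ is a decomposition $svt$ with $s = p_n \in S_{\mathbf{w}}$, and for any $m \neq n$ the word $w_m \in aA^*b$ factors as $t' \cdot b \cdot \varepsilon$ with $\varepsilon \in S_{\mathbf{w}}$; condition~\eqref{equation-def-s-ws-1} then yields $b \in S_{\mathbf{w}}$, a contradiction. Hence $u_n \in aA^*$, and the dual argument (using that every $w_m$ begins with $a$) gives $u_n \in A^*b$. Note also that your parenthetical claim that $s_n$ is ``a nonempty suffix, hence in $aA^*b$'' is unjustified: $s_n$ may be empty, and the argument must not rely on its first letter.
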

  \begin{proof}
  Let $n,m \in \N$ be distinct.  Suppose that $u_n \in b A^*$. Then $u_n = b u$
  for some $u \in A^*$, thus $w_n = p_n b u s_n$. Since $w_m \in a A^* b$ there
  is some $v \in A^*$ such that $w_m = a v b$, and so condition
  \eqref{equation-def-s-ws-1} implies that $b \in S_{\mathbf{w}}$, a
  contradiction. Hence $u_n \in aA^*$ and by symmetry $u_n \in A^*b$.
  \end{proof}

  By construction $S_{\mathbf{w}}$ is generated by $G = \bigcup_{n \geq 0} X_n
  \cup Y_n$, a set of subwords of words in $\mathbf{w}$. Let $G_n$ be
  the set of all words in $G$ of length at most $n$. Recall that we say that a
  generating set $T$ is irredundant if $v$ is not an element of the monoid
  generated by $T \setminus \{v\}$ for every $v \in T$. Let $T_0 = T_1 = G_1 =
  \{ \varepsilon \}$. Then $T_1$ is irredundant and $T_0 \subseteq T_1 \subseteq
  G_1$. For some $n \in \N$, suppose that we defined $T_n$ such that $T_n$ is
  an irredundant generating set for the monoid generated by $G_n$ and  
  $T_{n - 1} \subseteq T_n \subseteq G_n$. Since $S_{\mathbf{w}}$
  is a submonoid of $A^*$, it follows that $xy$ cannot be a shorter word than
  any of $x$ or $y$ for all $x, y \in S_{\mathbf{w}}$. If $x \in G_{n + 1}
  \setminus G_n$ and $x \notin \langle T_n \rangle$ then $T_n \cup \{x\}$ is
  still irredundant. In fact, by above $x$ cannot be used to generate any word
  in $T_n$ as $x$ is of length $n + 1$ and every word in $T_n$ is of length at
  most $n$. Since $G_{n + 1} \setminus G_n$ is finite we can repeat this until
  an irredundant generating set $T_{n + 1}$ for the monoid generated by $G_{n +
  1}$ is obtained. By the construction  $T_n \subseteq T_{n + 1} \subseteq G_{n
  + 1}$. Therefore $T_n$ satisfying the conditions above exists for all $n \in
  \N$. Let $T = \bigcup_{n \in \N} T_n$. Then it is routine to verify that $T$
  is an irredundant generating set for $S_{\mathbf{w}}$. We note that $T$ only
  needs to be a monoid generating set, and so we may assume that $\varepsilon
  \notin T$.

  \begin{claim}\label{claim-seq-0}
    For each $v \in T$, there are $t, t' \in S_{\mathbf{w}}$ and $n, m \in \N$
    such that $tv$ is a prefix of $p_n$, and $vt'$ is a suffix of $s_m$.
  \end{claim}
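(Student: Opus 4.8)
The plan is to exploit the inclusion $T \subseteq G = \bigcup_{n \geq 0}(X_n \cup Y_n)$ established just before the claim, which means every $v \in T$ is nonempty (as $\varepsilon \notin T$) and belongs to some $X_k$ or some $Y_k$. In either case the defining property of $X_k$ or $Y_k$ exhibits $v$ as a factor of one or two of the words $w_i$, flanked on the appropriate side by an element of $S_k \subseteq S_{\mathbf{w}}$; the whole task is then to check that each such flanking witness is forced to lie inside the longest $S_{\mathbf{w}}$-prefix $p_n$, respectively the longest $S_{\mathbf{w}}$-suffix $s_m$. First I would isolate the one structural observation that does all the work: the prefixes of a fixed word $w_n$ are linearly ordered by the prefix relation, so among the (finitely many, and nonempty since $\varepsilon$ qualifies) prefixes of $w_n$ lying in $S_{\mathbf{w}}$ there is a unique longest one, namely $p_n$, and \emph{every} prefix of $w_n$ belonging to $S_{\mathbf{w}}$ is automatically a prefix of $p_n$. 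The dual statement holds for suffixes and $s_m$.

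With this in hand I would split into the two cases. If $v \in X_k$, then by the definition of $X_k$ there are $i \in \N$, words $s, s' \in S_k \subseteq S_{\mathbf{w}}$ and $u \in A^*$ with $w_i = s v u v s'$. Since $v \in X_k \subseteq S_{k+1} \subseteq S_{\mathbf{w}}$ and $S_{\mathbf{w}}$ is a submonoid, both $sv$ and $vs'$ lie in $S_{\mathbf{w}}$; moreover $sv$ is a prefix and $vs'$ a suffix of $w_i$. By the observation above, $sv$ is a prefix of $p_i$ and $vs'$ is a suffix of $s_i$, so taking $t = s$, $t' = s'$ and $n = m = i$ proves the claim in this case. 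If instead $v \in Y_k$, the definition of $Y_k$ gives distinct $i, j \in \N$, elements $s, s' \in S_k \subseteq S_{\mathbf{w}}$ and words $q, q' \in A^*$ with $w_i = s v q$ and $w_j = q' v s'$. Exactly as before, $sv \in S_{\mathbf{w}}$ is a prefix of $w_i$, hence of $p_i$, while $vs' \in S_{\mathbf{w}}$ is a suffix of $w_j$, hence of $s_j$; taking $t = s$, $n = i$, $t' = s'$, $m = j$ finishes this case. In both cases $t, t' \in S_{\mathbf{w}}$, as required.

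I do not expect a genuine obstacle here: the argument is essentially an unwinding of the construction of $S_{\mathbf{w}}$, and the theorem's global hypothesis on $\mathbf{w}$ (and the running assumptions $a,b \notin S_{\mathbf{w}}$, $w_i \in aA^*b$) are not needed for this particular claim. The only points that require care, and which I would state explicitly, are (a) the closure of $S_{\mathbf{w}}$ under products, used to promote $s, v, s'$ to $sv, vs' \in S_{\mathbf{w}}$, and (b) the linear-order observation that lets a short $S_{\mathbf{w}}$-prefix be absorbed into the maximal one $p_n$. Everything else is bookkeeping, the main subtlety being purely notational: the letters $s, s', t, t'$ appearing in the definitions of $X_n$ and $Y_n$ must be kept distinct from the $t, t'$ demanded by the claim and from the longest suffixes $s_n$, so in the write-up I would rename the auxiliary $A^*$-words (here $q, q'$) to avoid collision.
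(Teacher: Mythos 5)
Your argument is correct and is essentially the paper's own proof: split according to whether $v\in X_k$ or $v\in Y_k$, use the defining factorisations of $w_i$ (resp.\ $w_i, w_j$) to see that $tv$ and $vt'$ lie in $S_{\mathbf{w}}$, and conclude by maximality of $p_n$ and $s_m$. Your extra remark that prefixes of a fixed word are linearly ordered just makes explicit the maximality step the paper leaves implicit.
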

  \begin{proof}
  Note that by construction, $T \subseteq \bigcup_{n \in \N} X_n \cup Y_n$.
  Suppose $v \in T \cap X_k$ for some $k \in \N$. Then $w_n = t v u v t'$ for
  some $n \in \N$, $t, t' \in S_k$, and $u \in A^*$. Hence $tv, vt' \in
  S_{\mathbf{w}}$, and so it then follows from the maximality of $p_n$ and
  $s_n$ that $tv$ is a prefix of $p_n$, and $vt'$ is a suffix of $s_n$. If $v
  \in T\cap Y_k$ for some $k \in \N$, then $w_n = t v q$ and $w_m = q' v t'$
  for some $n, m \in \N$, $q, q' \in A^*$, and $t, t' \in S_k$. Hence $tv, vt'
  \in S_{\mathbf{w}}$, and so $tv$ is a prefix of $p_n$, and $vt'$ is a suffix
  of $s_m$.
  \end{proof}

  \begin{claim}\label{claim-seq-1}
    For all $v \in T$ and all $n \in \N$, a prefix of $v$ is not a suffix of
    $u_n$, and a suffix of $v$ is not a prefix of $u_n$.
  \end{claim}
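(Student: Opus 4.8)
The plan is to argue by contradiction, exploiting the maximality of $p_n$ and $s_n$: an overlap between $v$ and $u_n$ would let me enlarge one of these extremal factors while staying inside $S_{\mathbf{w}}$. I will prove the first assertion (no nonempty prefix of $v$ is a suffix of $u_n$) in detail; the second is entirely dual, with the roles of prefix/suffix and of $p_n$/$s_n$ interchanged. So suppose toward a contradiction that some nonempty word $z$ is simultaneously a prefix of $v$ and a suffix of $u_n$, and write $v = z v'$ and $u_n = \bar{u} z$ with $v', \bar{u} \in A^*$. The whole argument reduces to a single key fact: $z \in S_{\mathbf{w}}$. Granting this, since $w_n = p_n \bar{u} z s_n$, the word $z s_n$ is a suffix of $w_n$; it lies in $S_{\mathbf{w}}$ because $S_{\mathbf{w}}$ is a submonoid and $z, s_n \in S_{\mathbf{w}}$, and it is strictly longer than $s_n$ because $z \neq \varepsilon$. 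This contradicts the maximality of $s_n$, finishing the first assertion.

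To establish $z \in S_{\mathbf{w}}$ I feed the overlap into the closure conditions defining $S_{\mathbf{w}}$, using Claim~\ref{claim-seq-0} to locate $v$ inside a prefix. By Claim~\ref{claim-seq-0} there are $t \in S_{\mathbf{w}}$ and $i \in \N$ with $tv$ a prefix of $p_i$, say $p_i = t z v' p''$; then $w_i = p_i u_i s_i = t z (v' p'' u_i s_i)$ exhibits $z$ immediately after the element $t \in S_{\mathbf{w}}$, while $w_n = (p_n \bar{u}) z s_n$ exhibits $z$ immediately before $s_n \in S_{\mathbf{w}}$. The main obstacle, and the reason both closure conditions are genuinely needed, is the dichotomy between $i \neq n$ and $i = n$. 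When $i \neq n$ the two occurrences live in distinct words, and condition~\eqref{equation-def-s-ws-1} applies directly to give $z \in S_{\mathbf{w}}$. When $i = n$ both occurrences lie in the same word $w_n$, and I combine the factorizations $p_n = t z v' p''$ and $u_n = \bar{u} z$ to write $w_n = t z (v' p'' \bar{u}) z s_n$, which is of the form $s v u v s'$ with $v = z$ and $s, s' \in S_{\mathbf{w}}$, so condition~\eqref{equation-def-s-ws-0} applies. The point requiring care here is that the two copies of $z$ must not overlap: the first lies entirely inside $p_n$ and the second inside $u_n$, and these are disjoint initial and later segments of $w_n$, so the factorization is legitimate and the double occurrence is real.

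Finally, the dual assertion (no nonempty suffix of $v$ is a prefix of $u_n$) is handled by the mirror image of this argument. If a nonempty $z$ is a suffix of $v$ and a prefix of $u_n$, write $v = v' z$ and $u_n = z \bar{u}$, and use the suffix clause of Claim~\ref{claim-seq-0}: there are $t' \in S_{\mathbf{w}}$ and $j$ with $v t'$ a suffix of $s_j$, say $s_j = s'' v' z t'$. This places $z$ immediately before $t' \in S_{\mathbf{w}}$ in $w_j$, while $w_n = p_n z \bar{u} s_n$ places $z$ immediately after $p_n \in S_{\mathbf{w}}$. The same split into $j \neq n$ (use~\eqref{equation-def-s-ws-1}) and $j = n$ (rewrite $w_n = p_n z (\bar{u} s'' v') z t'$ and use~\eqref{equation-def-s-ws-0}) yields $z \in S_{\mathbf{w}}$. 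Then $p_n z$ is a prefix of $w_n$ lying in $S_{\mathbf{w}}$ and strictly longer than $p_n$, contradicting the maximality of $p_n$.
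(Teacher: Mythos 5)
Your proof is correct and takes essentially the same route as the paper's: both invoke Claim~\ref{claim-seq-0} to place $v$ immediately after an element of $S_{\mathbf{w}}$ inside some $w_i$, split into the cases $i = n$ (apply condition~\eqref{equation-def-s-ws-0} to the two disjoint occurrences of the overlap $z$) and $i \neq n$ (apply condition~\eqref{equation-def-s-ws-1}), conclude $z \in S_{\mathbf{w}}$, and contradict the maximality of $s_n$ (dually, $p_n$). The only difference is presentational: you isolate ``$z \in S_{\mathbf{w}}$'' as the key intermediate fact and explicitly check the non-overlap of the two copies of $z$, which the paper leaves implicit.
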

  \begin{proof}
  Let $v \in T$ and $n \in \N$ be arbitrary. By Claim~\ref{claim-seq-0} there
  are $t, t' \in S_{\mathbf{w}}$ such that $tv$ is a prefix of $p_m$ and $vt'$
  is a suffix of $s_k$ for some $m, k \in \N$. Then there is $r \in A^*$ so
  that $w_m = t v r u_m s_m$. Suppose that $q$ is a  non-trivial prefix of $v$
  which is also a suffix of $u_n$. First, consider the case where $m = n$. Then
  $q \in S_{\mathbf{w}}$ by \eqref{equation-def-s-ws-0} as $w_m = t q h q s_m$
  for some $h \in A^*$. If $m \neq n$, then, since $w_m = t v r u_m s_m$ and
  $w_n = p_n u_n s_n$ where $t, s_n \in S_{\mathbf{w}}$, it follows from
  \eqref{equation-def-s-ws-1} that $q \in S_{\mathbf{w}}$. Hence in both cases
  $q \in S_{\mathbf{w}}$, which contradicts the maximality of $s_n$.

  The case where $q$ is non-trivial suffix of $v$ which is a prefix of $u_n$
  follows in an almost identical way, using $w_k = p_k u_k r' v t'$ for some
  $r' \in A^*$. 
  \end{proof}

  \begin{claim}\label{claim-seq-2}
    For every $v, v' \in T$, if a non-trivial prefix $q$ of $v$ is a suffix of
    $v'$, then $q = v = v'$.
  \end{claim}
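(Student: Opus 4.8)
The plan is to split the argument into two phases: first show that the overlap word $q$ itself lies in $S_{\mathbf{w}}$, and then upgrade this to the full conclusion $q = v = v'$ by a length/irredundancy argument. Throughout I will use that $u_n \in A^+$ is non-empty and that $T$ is irredundant.

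For the first phase, I would feed $q$ into the two occurrences guaranteed by Claim~\ref{claim-seq-0}. Since $q$ is a prefix of $v$ and there are $\hat t \in S_{\mathbf{w}}$ and $n$ with $\hat t v$ a prefix of $p_n$, the word $q$ occurs in $w_n$ immediately preceded by the $S_{\mathbf{w}}$-element $\hat t$, so $w_n = \hat t\, q\, (\cdots)$. Dually, since $q$ is a suffix of $v'$ and $v' t'$ is a suffix of some $s_m$ with $t' \in S_{\mathbf{w}}$, the word $q$ occurs in $w_m$ immediately followed by $t'$, giving $w_m = (\cdots)\, q\, t'$. If $n \neq m$, condition \eqref{equation-def-s-ws-1} applied with $q$ in the role of the repeated factor yields $q \in S_{\mathbf{w}}$. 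If $n = m$, I would observe that the prefix occurrence of $q$ lies inside $p_n$ while the suffix occurrence lies inside $s_n$; since $u_n$ is non-empty, these regions of $w_n = p_n u_n s_n$ are disjoint, so the two occurrences are genuinely distinct and $w_n = \hat t\, q\, h\, q\, t'$ for some $h \in A^*$. Condition \eqref{equation-def-s-ws-0} then gives $q \in S_{\mathbf{w}}$.

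For the second phase I would show $q = v$ (the equality $q = v'$ being symmetric). Write $v = q v_1$ and suppose for contradiction that $v_1 \neq \varepsilon$. The point of having already proved $q \in S_{\mathbf{w}}$ is that now $\hat t q \in S_{\mathbf{w}}$, since $S_{\mathbf{w}}$ is a submonoid. Using Claim~\ref{claim-seq-0} for $v$, the prefix realization $\hat t v = \hat t q v_1$ of a prefix of $p_n$ exhibits $v_1$ preceded by the $S_{\mathbf{w}}$-element $\hat t q$, while the suffix realization $v t' = q v_1 t'$ of a suffix of some $s_m$ exhibits $v_1$ followed by $t' \in S_{\mathbf{w}}$. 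Exactly as in the first phase, \eqref{equation-def-s-ws-1} (if the two witnessing words differ) or \eqref{equation-def-s-ws-0} together with the disjointness of the $p$- and $s$-regions (if they coincide) forces $v_1 \in S_{\mathbf{w}}$. But then $v = q v_1$ is a product of two non-empty elements of $S_{\mathbf{w}}$, each strictly shorter than $v$; factoring $q$ and $v_1$ over $T$ expresses $v$ as a product of elements of $T$ all of length $< |v|$, hence as a product of elements of $T \setminus \{v\}$, contradicting irredundancy. Therefore $v_1 = \varepsilon$ and $q = v$; the mirror argument with prefix and suffix roles interchanged gives $q = v'$, and the claim follows.

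The main obstacle, and the place where the hypotheses do real work, is the case in each phase where the two relevant occurrences fall inside the same word $w_n$: there \eqref{equation-def-s-ws-1} is unavailable, and one must instead manufacture the configuration $s\, r\, u\, r\, s'$ demanded by \eqref{equation-def-s-ws-0} with $r \in \{q, v_1\}$. This is precisely what the non-emptiness of $u_n$ buys, since it separates the prefix region $p_n$ from the suffix region $s_n$ so that the two occurrences cannot coincide or overlap. A secondary subtlety is that the order of the two phases is essential: establishing $q \in S_{\mathbf{w}}$ must come first, because it is exactly this fact that allows one to absorb $\hat t q$ into a single element of $S_{\mathbf{w}}$ and thereby run the second phase.
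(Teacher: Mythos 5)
Your proposal is correct and follows essentially the same route as the paper's own proof: first deduce $q \in S_{\mathbf{w}}$ from the two occurrences supplied by Claim~\ref{claim-seq-0} via conditions \eqref{equation-def-s-ws-0} or \eqref{equation-def-s-ws-1}, then reapply Claim~\ref{claim-seq-0} to $v$ alone, absorb $\hat t q$ into $S_{\mathbf{w}}$ to conclude the complementary factor lies in $S_{\mathbf{w}}$, and finish by irredundancy of $T$. Your explicit remark that the non-emptiness of $u_n$ is what separates the two occurrences in the $n=m$ case is a point the paper leaves implicit, but the argument is the same.
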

  \begin{proof}
  Let $v, v' \in T$ be arbitrary. Suppose that $v = q r$ and $v' = r' q$ for
  some $r, r' \in A^*$ and $q \in A^+$. By Claim~\ref{claim-seq-0} there are
  $t, t' \in S_{\mathbf{w}}$ and $n, m \in \N$ such that $tv$ is a prefix of
  $p_n$, and $v't'$ is a suffix of $s_m$. If $n = m$ then there is $x \in A^*$
  such that $w_n = t v x v' t' = t q r x r' q t'$, and so $q \in
  S_{\mathbf{w}}$ by \eqref{equation-def-s-ws-0} since $t, t' \in
  S_{\mathbf{w}}$. If $n \neq m$, then $w_n = t v x = t q r x$ and $w_m = x' v'
  t' = x' r' q t'$ for some $x, x' \in A^*$. Since $t, t' \in S_{\mathbf{w}}$,
  \eqref{equation-def-s-ws-1} implies that $q \in S_{\mathbf{w}}$. Hence  $q
  \in S_{\mathbf{w}}$ in both cases.

  Since $v \in T$, by Claim~\ref{claim-seq-0} there are $n, m \in \N$, $l, l'
  \in S_{\mathbf{w}}$ so that $l v$ is a prefix of $p_{n}$ and $v l'$ is a
  suffix of $s_{m}$. As in the previous paragraph, if $n = m$ then there is $x
  \in A^*$ such that $w_n = l v x v l' = l q r x q r l'$, and so $r \in
  S_{\mathbf{w}}$ by \eqref{equation-def-s-ws-0} since $lq, l' \in
  S_{\mathbf{w}}$. If $n \neq m$, then $w_n = l v x = l q r x$ and $w_m = x' v
  l' = x' q r l'$ for some $x, x' \in A^*$. Since $lq, l' \in S_{\mathbf{w}}$,
  \eqref{equation-def-s-ws-1} implies that $r \in S_{\mathbf{w}}$. Hence  $r
  \in S_{\mathbf{w}}$ in both cases. Since $T$ is irredundant, $q, r \in
  S_{\mathbf{w}}$, and $qr \in T$, it follows that $r = \varepsilon$. The same
  argument for $v'$ implies that $r' = \varepsilon$, and so $q = v = v'$.
  \end{proof}
  
  Let $f_1, f_2, \ldots \in X^X$.
  We will construct a homomorphism $\Phi : A^+ \rightarrow
  X^X$ such that $(w_n)\Phi = f_n$ for all $n \in \N$. In order to do
  that we will require the following auxiliary functions $\alpha, \beta, \gamma
  \in X ^ X$ defined as follows:
  \[
    (\ldots, x_1, x_0)\alpha  = (\ldots, x_0, a) \qquad
    \text{and}\qquad (\ldots, x_1, x_0)\beta  = (\ldots, x_0, b).
  \]
  If $x_{i-1}\ldots x_0 = v \in T$ for some $i \geq 1$, $x_j \in A^+$ for all
  $j \in \{0, \ldots i -1\}$, and $x_i \in F(A)$, we define 
  \[
    (\ldots, x_1, x_0)\gamma = (\ldots, x_{i+1}, x_i v)
  \]
  and otherwise define $(\ldots, x_1, x_0)\gamma = (\ldots, x_1, x_0)$.

  Suppose there are $i, i' \in \N$, such that $i \geq i'$,  $x_{i-1}\ldots x_0
  = v$, and $x_{i' - 1} \ldots x_0 = v'$ for some $v, v' \in T$, and so that
  $x_j \in A^+$ for all $j \in \{0, \ldots, i - 1\}$. Then $v'$ is a suffix of
  $v$. By Claim~\ref{claim-seq-2} this is only possible if $v = v'$. Hence
  $\gamma$ is well-defined. Let $\Psi : A^+ \to X^X$ be the canonical
  homomorphism induced by $(a)\Psi = \alpha$ and $(b)\Psi = \beta \circ
  \gamma$. We will later use $\Psi$ to define the required $\Phi$.

  \begin{claim}\label{claim-seq-3}
    For $v \in aA^*$ such that no prefix of $v$ is a suffix of a word in $T$,
    there are $z_1, \ldots, z_k \in A^+$ such that $z_1\ldots z_k = v$ and $
    (\ldots, x_1, x_0) \left( (v) \Psi \right) = (\ldots, x_1, x_0,z_1, \ldots,
    z_k) $ for every $(\ldots, x_1, x_0) \in X$.
  \end{claim}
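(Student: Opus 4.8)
The plan is to track the action of $\Psi$ on an arbitrary state $(\ldots, x_1, x_0)$ coordinate by coordinate as we read $v = c_1 c_2 \cdots c_\ell$ (with each $c_i \in A$) from left to right, and to prove by induction on the length of the processed prefix that the original coordinates are never disturbed. Concretely, I would establish the invariant that
$(\ldots, x_1, x_0)\big((c_1 \cdots c_j)\Psi\big) = (\ldots, x_1, x_0, z^{(j)}_1, \ldots, z^{(j)}_{k_j})$
for suitable $z^{(j)}_1, \ldots, z^{(j)}_{k_j} \in A^+$ whose product is exactly the processed prefix $c_1 \cdots c_j$, with the coordinate $x_0$ and everything below it unchanged. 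Taking $j = \ell$ then produces the required $z_1, \ldots, z_k$ with $z_1 \cdots z_k = v$.

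The key structural observation is that each of $\alpha$, $\beta$, and $\gamma$ preserves the product of the coordinates lying strictly above $x_0$: the maps $\alpha$ and $\beta$ append a single letter at coordinate $0$, while a $\gamma$-collapse replaces a block $x_{i-1}\cdots x_0 \in T$ by the single coordinate $x_i(x_{i-1}\cdots x_0)$, which alters neither the product of these coordinates nor anything below $x_i$, provided $x_i$ is itself one of the new coordinates. The base case $j=1$ uses the hypothesis $v \in aA^*$: since $c_1 = a$ we apply only $\alpha$, which appends $a$ and leaves $x_0$ untouched. In the inductive step I push $c_{j+1}$; the case $c_{j+1} = a$ is immediate, and for $c_{j+1} = b$ the map $(b)\Psi = \beta\circ\gamma$ first appends $b$ and then applies $\gamma$, which by the well-definedness established in Claim~\ref{claim-seq-2} collapses at most one block, namely the unique block of coordinates $x_{i-1}, \ldots, x_0$ whose product lies in $T$ (if any).

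The main obstacle is to rule out any $\gamma$-collapse that reaches the barrier $x_0$, and this is exactly where the hypothesis that no prefix of $v$ is a suffix of a word in $T$ is used. By the invariant, after appending $b$ the new coordinates spell the prefix $c_1 \cdots c_{j+1}$ of $v$, so the block collapsed by $\gamma$ spells a suffix $q \in T$ of $c_1 \cdots c_{j+1}$. If that block were the entire run of new coordinates, then $q = c_1 \cdots c_{j+1}$ would be a prefix of $v$ lying in $T$, hence a prefix of $v$ that is a suffix of the word $q \in T$; and if the block reached below $x_0$ (which can happen only if $x_0$ is itself a positive word), then $c_1 \cdots c_{j+1}$ would be a proper suffix of the collapsed word $q \in T$, again a prefix of $v$ that is a suffix of a word of $T$. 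Both possibilities contradict the hypothesis, so the collapsed block must be a proper suffix of the new-coordinate run and is absorbed into another new coordinate. Hence $x_0$ and the coordinates below it are untouched and the product of the new coordinates remains $c_1 \cdots c_{j+1}$; the grouping $z^{(j+1)}_1, \ldots, z^{(j+1)}_{k_{j+1}}$ is precisely the record of which new coordinates have been merged by these internal collapses. This completes the induction.
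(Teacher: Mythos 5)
Your proposal is correct and follows essentially the same route as the paper's proof: an induction on the length of the processed prefix maintaining the invariant that the newly created coordinates multiply to that prefix, with the hypothesis that no prefix of $v$ is a suffix of a word in $T$ used exactly as in the paper to rule out a $\gamma$-collapse whose block is the whole run of new coordinates or extends into $x_0$ and beyond. The only differences are notational (your ``barrier'' phrasing versus the paper's explicit case $z_k\ldots z_j b\in T$ with $k>1$), not mathematical.
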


  \begin{proof}
  Let $v \in aA^*$ be such that no prefix of $v$ is a suffix of a word in $T$,
  and let $v = y_1 \ldots y_m$ for some $m \in \N$ and $y_1, \ldots, y_m \in
  A$. Then $y_1 = a$, and so $ (\ldots, x_1, x_0) \alpha = (\ldots, x_1, x_0,
  y_1)$ for all $(\ldots, x_1, x_0) \in X$. Suppose that for some $i \in \{1,
  \ldots, m - 1\}$ there are $j \in \N$ and $z_1, \ldots, z_j \in A^+$ such that
  $ (\ldots, x_1, x_0) \left( (y_1 \ldots y_i )\Psi \right) = (\ldots, x_1,
  x_0, z_1, \ldots, z_j)$ for every $(\ldots, x_1, x_0) \in X$ and $y_1\ldots
  y_i = z_1 \ldots z_j$. We proceed with an induction on $i$.

  In order to prove the inductive step, there are two cases to consider, either
  $y_{i + 1} = a$, or $y_{i + 1} = b$. Suppose that $y_{i+1} = a$. Since $\Psi$
  is a homomorphism, $(\ldots, x_1, x_0) \left( (y_1 \ldots y_{i+1})\Psi
  \right) = (\ldots, x_1, x_0, z_1, \ldots ,z_j, a)$ for all $(\ldots, x_1,
  x_0) \in X$
  and $z_1\ldots z_j a = y_1 \ldots y_{i + 1}$, as required.

  Suppose that $y_{i+1} = b$. Then $ (\ldots, x_1,
  x_0) \left( (y_1 \ldots y_{i+1})\Psi \right) = (\ldots, x_1, x_0, z_1, \ldots
  ,z_j, b) \gamma$ for all $(\ldots, x_1, x_0) \in X$ 
  and $z_1\ldots z_j b = y_1 \ldots y_{i + 1}$, as $\Psi$ is a homomorphism.
  Since $y_1\ldots y_{i + 1}$ is a prefix of $v$, by the assumption it cannot
  be a suffix of any word in $T$. Thus $z_1\ldots z_jb \notin T$ and if $x_0,
  \ldots, x_t \in A^+$ then $x_t \ldots x_0 z_1 \ldots z_j b \notin T$ for all
  $t \in \N$. Hence either $\gamma$ acts as the identity on $(\ldots, x_1, x_0,
  z_1, \ldots ,z_j, b)$, or there is $k > 1$ such that $z_k \ldots z_j b \in
  T$. In the later case
  \begin{align*}
    (\ldots, x_1, x_0)\left( (y_1 \ldots y_{i+1})\Psi\right)
    &= (\ldots, x_1, x_0, z_1, \ldots ,z_j, b) \gamma \\
    &= (\ldots, x_1, x_0, z_1, \ldots , z_{k - 2},z_{k - 1}z_k \ldots z_j b),
  \end{align*}
  and $z_1\ldots z_j b = y_1 \ldots y_{i + 1}$. In both cases there are $j \in
  \N$ and $z_1, \ldots, z_j \in A^+$ such that $ (\ldots, x_1, x_0) \left( (y_1
  \ldots y_{i + 1} )\Psi \right) = (\ldots, x_1, x_0, z_1, \ldots, z_j)$ for
  every $(\ldots, x_1, x_0) \in X$ and $y_1\ldots y_{i + 1} = z_1 \ldots z_j$,
  which proves  the inductive step. Hence the claim holds by induction.
  \end{proof}

  \begin{claim}\label{claim-seq-4}
    Let $v \in S_{\mathbf{w}}$. Then  $(\ldots, x_1, x_0) \left((v)\Psi \right)
    = (\ldots, x_1, x_0v)$ for all $(\ldots, x_1, x_0) \in X$ and $(v)\Psi$
    is a bijection.  
  \end{claim}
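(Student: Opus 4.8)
The plan is to reduce everything to the case of a single generator $v \in T$ and then bootstrap to arbitrary $v \in S_{\mathbf{w}}$ using that $\Psi$ is a homomorphism. Since $T$ is a monoid generating set for $S_{\mathbf{w}}$, write $v = g_1 g_2 \cdots g_r$ with each $g_i \in T$ (the case $v = \varepsilon$ being immediate, as it changes nothing). If I can establish the identity $(\ldots, x_1, x_0)\big((g)\Psi\big) = (\ldots, x_1, x_0 g)$ for every generator $g \in T$ and every point of $X$, then, because the right-hand side again has its position-$0$ coordinate in $F(A)$ with all other coordinates unchanged, I can apply the single-generator identity $r$ times in succession. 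As $\Psi$ is a homomorphism and functions act on the right, this yields $(\ldots, x_1, x_0)\big((v)\Psi\big) = (\ldots, x_1, x_0 g_1 g_2 \cdots g_r) = (\ldots, x_1, x_0 v)$, as required.

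For the generator case, fix $v \in T$. By Lemma~\ref{lem-universal-sequences-ab} and $\varepsilon \notin T$ we have $v \in aA^*b$, so write $v = y_1 \cdots y_m$ with $y_1 = a$ and $y_m = b$, and set $u = y_1 \cdots y_{m-1} \in aA^*$. Every non-trivial prefix of $u$ is a proper prefix of $v$, hence by Claim~\ref{claim-seq-2} cannot be a suffix of any word in $T$ (such a coincidence would force it to equal $v$). Thus $u$ satisfies the hypothesis of Claim~\ref{claim-seq-3}, which supplies $z_1, \ldots, z_k \in A^+$ with $z_1 \cdots z_k = u$ and $(\ldots, x_1, x_0)\big((u)\Psi\big) = (\ldots, x_1, x_0, z_1, \ldots, z_k)$. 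Applying the last letter through $(b)\Psi = \beta \circ \gamma$, the map $\beta$ appends $b$ to give $(\ldots, x_1, x_0, z_1, \ldots, z_k, b)$; now the coordinates in positions $0, \ldots, k$ concatenate to $z_1 \cdots z_k b = ub = v \in T$, while the coordinate in position $k+1$ is $x_0 \in F(A)$. Hence $\gamma$ collapses exactly this block and replaces $x_0$ by the free-group product $x_0 v$, producing $(\ldots, x_1, x_0 v)$.

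The point that needs care is confirming that $\gamma$ acts on the intended block rather than a shorter one, and here I would invoke the well-definedness of $\gamma$ established when it was defined. Any competing collapse would occur at a position $i' \leq k$ and would concatenate to a proper suffix of $v$; if such a suffix lay in $T$ it would, by Claim~\ref{claim-seq-2} (a word of $T$ that is a suffix of $v \in T$ must equal $v$), coincide with $v$, which is impossible on length grounds. Thus the block ending at position $k$ is the unique one on which $\gamma$ acts, and the formula follows. Bijectivity of $(v)\Psi$ is then immediate: the map is right-multiplication by $v$ in $F(A)$ on the position-$0$ coordinate, whose two-sided inverse is right-multiplication by $v^{-1}$; both preserve membership in $X$ since only the position-$0$ coordinate (which stays in $F(A)$) changes and the eventual-constancy condition is untouched. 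The main obstacle throughout is bookkeeping the action of $\gamma$ precisely, which is exactly what Claims~\ref{claim-seq-2} and~\ref{claim-seq-3} are designed to control.
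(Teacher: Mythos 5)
Your proof is correct and follows essentially the same route as the paper's: reduce to a generator $v \in T$, split off the final letter $b$, apply Claim~\ref{claim-seq-3} to the prefix in $aA^*$ (justified via Claim~\ref{claim-seq-2}), let $\gamma$ collapse the resulting block onto $x_0v$, observe that right-multiplication by $v^{-1}$ inverts the map, and extend to all of $S_{\mathbf{w}}$ by the homomorphism property. Your extra paragraph verifying that $\gamma$ collapses the full block and not a shorter one makes explicit a point the paper leaves to the earlier well-definedness argument for $\gamma$, but the substance is identical.
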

  \begin{proof}
  Let $v \in T$. Then $v \in aA^*b$ as $S_{\mathbf{w}} \subseteq aA^*b \cup
  \{\varepsilon\}$, and so $v = v' b$ for some $v' \in aA^*$. By
  Claim~\ref{claim-seq-2} any proper prefix of $v$, and hence any prefix of
  $v'$, is not a suffix of any word in $T$. Hence by Claim~\ref{claim-seq-3}
  there exists $j \in \N$ and $z_1, \ldots, z_j \in A^+$ such that   $(\ldots,
  x_1, x_0)\left( (v')\Psi \right) = (\ldots, x_1, x_0, z_1, \ldots, z_j)$ for
  all $(\ldots, x_1, x_0) \in X$ and $z_1\ldots z_j = v'$. 
  Since $v = z_1\ldots z_j b$, $\Psi$ is a homomorphism, and $x_0 \in F(A)$, it
  follows that
  \begin{equation}\label{equation-psi-on-gen}
    (\ldots, x_1, x_0)\left( (v)\Psi \right) = (\ldots, x_1, x_0, z_1, \ldots,
    z_j, b) \gamma = (\ldots, x_1, x_0 v).
  \end{equation}

  Clearly, $(\ldots, x_1, x_0) \mapsto (\ldots, x_1, x_0 v^{-1})$ is the
  inverse map of $(v)\Psi$. Therefore, we are done, as $T$ is a generating set
  for $S_{\mathbf{w}}$. 
  \end{proof}

  In order to define the required $\Phi$, we need a final auxiliary 
  function $\delta \in X^X$, defined as follows. 
  If there exist $n, i \geq 1$, such that $x_{i-1} \cdots x_0 = u_n$,
  $x_0, \ldots, x_{i - 1} \in A^+$, and $x_i \in F(A)$, then we define
  \[ (\ldots, x_1, x_0)\delta=
          (\ldots, x_{i+1}, x_i p_n^{-1})f_n \circ ((s_n)\Psi)^{-1} 
  \] 
  and we define $(\ldots, x_1, x_0)\delta= (\ldots, x_1, x_0)$ otherwise.
  Note that $((s_n)\Psi)^{-1}$ is defined by Claim~\ref{claim-seq-4}. Suppose
  there are $i, i', n, n' \in \N$, $i \geq i'$ such that $x_{i-1}\ldots x_0 =
  u_n$ and $x_{i' - 1} \ldots x_0 = u_{n'}$ where $x_j \in A^+$ for all $j \in
  \{0, \ldots, i - 1\}$ and $x_i, x_{i'} \in F(A)$. Then $u_{n'}$ is a
  suffix of $u_n$.  On the other hand, if $n'\not= n$, then  $u_{n'}$ is not a
  subword of $w_n$ (by assumption in the statement of the theorem) and hence
  not of $u_n$ either.  Hence $n = n'$, and so $i = i'$, and $\delta$ is
  well-defined. 

  Let $\Phi$ be the canonical homomorphism
  induced by $(a)\Phi = \alpha$ and $(b)\Phi = \beta \circ \gamma \circ
  \delta$.

  \begin{claim}\label{claim-seq-5}
    If $v \in S_{\mathbf{w}}$, then $(v)\Phi = (v)\Psi$.
  \end{claim}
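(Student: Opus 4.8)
The plan is to reduce to the generating set $T$ and then show that the extra factor $\delta$, which appears in $\Phi$ but not in $\Psi$, never acts nontrivially. Since $\Phi$ and $\Psi$ are homomorphisms on $A^+$ with $(a)\Phi = (a)\Psi = \alpha$ while $(b)\Phi = \beta\circ\gamma\circ\delta$ and $(b)\Psi = \beta\circ\gamma$, and since $T$ is a monoid generating set for $S_{\mathbf{w}}$, it suffices to prove $(v)\Phi = (v)\Psi$ for each $v \in T$; the general case follows by factoring $v$ over $T$ and using that both maps are homomorphisms.

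Fix $v \in T$ and write $v = y_1 \cdots y_m$, recalling that $v \in aA^*b$. I would argue by induction on $i$ that, for every $(\ldots, x_1, x_0) \in X$, the image under the partial product corresponding to the prefix $y_1 \cdots y_i$ agrees for $\Phi$ and $\Psi$, and that the instance of $\delta$ applied when $y_i = b$ is the identity on the relevant sequence. Granting the inductive hypothesis, the partial product for $\Psi$ is described by Claim~\ref{claim-seq-3} (whose hypothesis holds for every proper prefix of $v$, since by Claim~\ref{claim-seq-2} no non-trivial proper prefix of $v$ is a suffix of a word in $T$): the state is $(\ldots, x_1, x_0, z_1, \ldots, z_j)$ with $z_1 \cdots z_j = y_1 \cdots y_i$ a prefix of $v$, the $z_k$ lying in $A^+$ and $x_0 \in F(A)$ forming their left boundary. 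For the final letter $y_m = b$ the map $\gamma$ merges the whole trailing word into $x_0$, yielding $(\ldots, x_1, x_0 v)$ as in Claim~\ref{claim-seq-4}; this case is handled separately but identically in spirit.

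The crux is to show that the $\delta$ applied after a letter $b$ is the identity. By definition $\delta$ acts nontrivially only if a trailing run of entries, each in $A^+$ and with the entry immediately to its left lying in $F(A)$, concatenates to some $u_n$. There are two possibilities. If this run lies within the freshly appended entries, its concatenation is a suffix of the prefix $y_1 \cdots y_i$ of $v$, hence a subword of $v$; but $u_n$ is never a subword of a word in $T$. Indeed, by Claim~\ref{claim-seq-0} each $v \in T$ occurs as a subword of some $p_k$, so if $u_n$ were a subword of $v$ it would be a subword of $p_k$, and hence of $w_k$; the hypothesis of Theorem~\ref{thm-universal-sequences} that $u_n$ is a subword of $w_m$ only for $m = n$ forces $n = k$, whence $u_k$ would be a subword of $p_k$, contradicting the hypothesis that $u_n$ is not a subword of $p_n$. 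If instead the run crosses into the original entry $x_0$, then the nonempty appended prefix $y_1 \cdots y_i$ of $v$ is a suffix of $u_n$, contradicting Claim~\ref{claim-seq-1}, which asserts that no prefix of $v$ is a suffix of $u_n$; in the terminal case the trailing entry is $x_0 v$, so a match would make $v$ itself a suffix of $u_n$, again contradicting Claim~\ref{claim-seq-1}. Hence $\delta$ is the identity, the inductive step closes, and $(v)\Phi = (v)\Psi$.

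I expect the main obstacle to be isolating the clean statement that $u_n$ is never a subword of a word in $T$, together with its short derivation from Claim~\ref{claim-seq-0} and the subword hypotheses on $u_n$, $p_n$, and the $w_m$. Once this is in hand, the rest is bookkeeping on the sequence entries, aided by the observation that it suffices to exclude a match at the level of words, so that the precise entry boundaries produced by $\gamma$ are immaterial.
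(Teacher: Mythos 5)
Your proposal is correct and follows essentially the same route as the paper: reduce to the irredundant generating set $T$, induct on prefixes of $v \in T$ using Claims~\ref{claim-seq-2}, \ref{claim-seq-3} and \ref{claim-seq-4} to describe the image sequence, and kill the $\delta$ factor by the same two-case analysis (a match inside the fresh entries makes $u_n$ a subword of $v$ and hence, via Claim~\ref{claim-seq-0} and the subword hypotheses, of $p_n$; a match crossing into $x_0$ makes a prefix of $v$ a suffix of $u_n$, contradicting Claim~\ref{claim-seq-1}). The only cosmetic difference is that you isolate the statement that $u_n$ is never a subword of a word in $T$ as a separate observation, which the paper proves inline.
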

  \begin{proof}
  Suppose that $v = y_1 \ldots y_m \in T$ where $y_i \in A$ for all $i \in \{1,
  \ldots, m\}$. Since $S_{\mathbf{w}} \subseteq a A^* b \cup \{\varepsilon \}$,
  it follows that $y_1 = a$, and so $(y_1)\Phi = \alpha = (y_1)\Psi$. Suppose
  $(y_1 \ldots y_i) \Phi  = (y_1 \ldots y_i) \Psi $ for some $i \in \{1,
  \ldots, m - 1\}$. We proceed by indution on $i$. 
  
  It follows from the inductive hypothesis that $(y_1 \ldots y_{i+1}) \Phi  =
  (y_1 \ldots y_i) \Psi \circ (y_{i + 1})\Phi$.
  If $y_{i + 1} = a$, then $(y_{i + 1}) \Phi = (y_{i + 1}) \Psi$, proving the
  first case of the inductive step. Suppose that $y_{i + 1} = b$, then $(y_{i
  + 1}) \Phi =(y_{i + 1}) \Psi \circ \delta$, and so $(y_1 \ldots y_{i+1}) \Phi
  = (y_1 \ldots y_{i + 1}) \Psi \circ \delta$. If $i + 1 < m$, then $y_1 \ldots
  y_{i + 1}$ is a proper prefix of $v$. By Claim~\ref{claim-seq-2} for any $j
  \in \{1, \ldots, i + 1\}$ the proper prefix $y_1 \ldots y_j$ of $v$ is a not
  a suffix of any word in $T$. Since $y_1 \ldots y_{i + 1} \in aA^*$, by
  Claim~\ref{claim-seq-3} there exists $j \in \N$ and $z_1, \ldots, z_j \in
  A^+$ such that $z_1\ldots z_j = y_1 \ldots y_{i + 1}$ and  $(\ldots, x_1,
  x_0) \left( (y_1 \ldots y_{i + 1})\Psi \right) = (\ldots, x_1, x_0, z_1,
  \ldots, z_j)$ for all $(\ldots, x_1, x_0) \in X$. If $i + 1 = m$, then $y_1
  \ldots y_{i + 1} = v \in S_{\mathbf{w}}$, and so $(\ldots, x_1, x_0) \left(
  (y_1 \ldots y_{i + 1})\Psi \right) = (\ldots, x_1, x_0 y_1 \ldots y_{i + 1})$
  for all $(\ldots, x_1, x_0) \in X$ by Claim~\ref{claim-seq-4}. Hence in any 
  case there are $j \geq 0$, $z_0 \in A^*$, and $z_1, \ldots, z_j \in A^+$
  such that $z_0\ldots z_j = y_1 \ldots y_{i + 1}$ and for all $(\ldots, x_1,
  x_0) \in X$
  \begin{equation}\label{equation-step-in-claim-8}
    (\ldots, x_1, x_0) \left( (y_1 \ldots y_{i + 1})\Psi \right)
    = (\ldots, x_1, x_0z_0, z_1, \ldots, z_j).
  \end{equation}

  We will show that $\delta$ acts as the identity on $(\ldots, x_1, x_0)
  \left((y_1 \ldots y_{i + 1})\Psi \right)$ for all $(\ldots, x_1, x_0) \in X$.
  Fix $(\ldots, x_1, x_0) \in X$, and let $z_0, \ldots, z_j \in A^+$ be as in
  \eqref{equation-step-in-claim-8}. Suppose that there are $k, n \geq 1$
  such that $x_{k - 1}, \ldots, x_1, x_0z_0 \in A^+$, $x_k \in F(A)$,
  and $x_{k - 1} \ldots x_0 z_0 \ldots z_j = u_n$. Then $z_0 \ldots z_j = y_1
  \ldots y_{i + 1}$ is both a prefix of $v$ and a suffix of $u_n$,
  contradicting Claim~\ref{claim-seq-1}. If $k > 0$ and $z_k \ldots z_j = u_n$,
  then $u_n$ is a subword of $v$ for some $n \in \N$. By
  Claim~\ref{claim-seq-0} there are $t \in S_{\mathbf{w}}$ and $m \in \N$ such
  that $tv$ is a prefix of $p_m$, and so $u_n$ is a subword of $p_m$. Moreover,
  by the assumption of the theorem $m = n$, contradicting by the hypothesis of
  the theorem.  Hence $\delta$ acts as identity on $(\ldots, x_1, x_0z_0, z_1,
  \ldots, z_j)$, proving that $(y_1 \ldots y_{i + 1}) \Phi  = (y_1 \ldots y_{i
  + 1}) \Psi$, and so the inductive step. It then follows by induction that
  $(y_1 \ldots y_i)\Phi = (y_1 \ldots y_i)\Psi$ for all $i \in \{1, \ldots,
  m\}$. In particular, if $i = m$, then $(v)\Phi = (v)\Psi$. Since $v \in T$ is
  arbitrary and $T$ is a generating set for $S_{\mathbf{w}}$, it follows that
  $(v) \Phi = (v)\Psi$ for all $v \in S_{\mathbf{w}}$.
  \end{proof}

  \begin{claim}\label{claim-seq-6}
    $(u_n)\Phi = (u_n) \Psi \circ \delta$ for all $n \in \N$.
  \end{claim}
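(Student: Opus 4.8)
The plan is to prove the claim by the same prefix-induction strategy used for Claim~\ref{claim-seq-5}, but now tracking the single extra copy of $\delta$ that must survive at the final letter of $u_n$. By Claim~\ref{claim-seq--1} I may write $u_n = y_1 \cdots y_m$ with $y_1 = a$ and $y_m = b$. Since $(b)\Phi = \beta \circ \gamma \circ \delta = (b)\Psi \circ \delta$ while $(a)\Phi = \alpha = (a)\Psi$, it suffices to show that $\Phi$ and $\Psi$ agree on the proper prefix $y_1 \cdots y_{m-1}$: granting this, and because $\Phi$ is a homomorphism with $y_m = b$, one obtains $(u_n)\Phi = (y_1 \cdots y_{m-1})\Psi \circ \beta \circ \gamma \circ \delta = (y_1\cdots y_m)\Psi \circ \delta = (u_n)\Psi \circ \delta$, as required. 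So first I would prove, by induction on $i \le m-1$, that $(y_1\cdots y_i)\Phi = (y_1\cdots y_i)\Psi$.

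The base case $i=1$ is immediate, and in the inductive step the only nontrivial case is $y_{i+1}=b$, where $(y_1\cdots y_{i+1})\Phi = (y_1\cdots y_{i+1})\Psi \circ \delta$, so I must show that $\delta$ acts as the identity on every point in the image of $(y_1 \cdots y_{i+1})\Psi$. Writing $q = y_1\cdots y_{i+1}$, a proper prefix of $u_n$ ending in $b$, note that $q \in aA^*$ and, by the second part of Claim~\ref{claim-seq-1}, no prefix of $q$ is a suffix of a word in $T$; hence Claim~\ref{claim-seq-3} applies and gives $(\ldots,x_1,x_0)\big((q)\Psi\big) = (\ldots, x_1, x_0, z_1, \ldots, z_k)$ with $z_1\cdots z_k = q$ and each $z_j \in A^+$. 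It then remains to check that $\delta$ does not fire on such a tuple, i.e.\ that no suffix run of $A^+$-blocks bounded on the left by an $F(A)$-entry spells some $u_{n'}$.

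The main obstacle is this non-firing check, which I would handle by locating the hypothetical spelled word $u_{n'}$ relative to the fresh blocks $z_1,\ldots,z_k$. If the run lies entirely within the $z_j$, then $u_{n'}$ is a suffix of $q$ and hence a subword of $u_n$; the subword hypothesis of the theorem forces $n'=n$, contradicting $|u_{n'}| \le |q| < |u_n|$. If instead the run reaches past $z_1$ into the original blocks, then it spells $u_{n'} = r''q$ with $r'' \in A^+$ nonempty, so $q$ is simultaneously a prefix of $u_n$ and a suffix of $u_{n'}$. When $n' \ne n$ this gives $w_n = p_n q t$ and $w_{n'} = t' q s_{n'}$ with $p_n, s_{n'} \in S_{\mathbf{w}}$, and \eqref{equation-def-s-ws-1} yields $q \in S_{\mathbf{w}}$; then $p_n q$ is a prefix of $w_n$ lying in $S_{\mathbf{w}}$ and strictly longer than $p_n$, contradicting the maximality of $p_n$. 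The genuinely delicate subcase is $n'=n$, where $u_n = r''q$ makes $q$ a proper border of $u_n$ and the two occurrences of $q$ in $w_n$ may overlap, so \eqref{equation-def-s-ws-0} cannot be invoked directly.

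To close this last subcase I would first record a separate observation that $u_n$ has no proper border at all. Indeed, if it had one, take a shortest border $q_0$; a standard word-combinatorial argument (a border longer than half the word forces, via the induced period, a strictly shorter border) shows $2|q_0| \le |u_n|$, so $u_n = q_0 w q_0$ for some $w \in A^*$ and $w_n = p_n q_0 w q_0 s_n$ with $p_n, s_n \in S_{\mathbf{w}}$; then \eqref{equation-def-s-ws-0} gives $q_0 \in S_{\mathbf{w}}$, and $p_n q_0$ again contradicts the maximality of $p_n$. With this observation in hand the subcase $n'=n$ is impossible, so in every case $\delta$ acts as the identity on the image of $(y_1\cdots y_{i+1})\Psi$. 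This completes the inductive step, hence the agreement of $\Phi$ and $\Psi$ on $y_1\cdots y_{m-1}$, and hence the claim.
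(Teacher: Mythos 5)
Your proof is correct and follows essentially the same route as the paper's: induct on the prefixes of $u_n$ to show $\Phi$ and $\Psi$ agree on $y_1\cdots y_{m-1}$, reduce the inductive step to showing $\delta$ does not fire on the image of a proper prefix under $\Psi$ (via Claims~\ref{claim-seq--1}, \ref{claim-seq-1} and~\ref{claim-seq-3}), and split the potential firing into a run inside the new blocks (ruled out by the subword hypothesis) and a run extending into the old coordinates (ruled out via $S_{\mathbf{w}}$ and the maximality of $p_n$). The one genuine difference is in the second of these cases: the paper applies condition~\eqref{equation-def-s-ws-1} directly to conclude $z_1\cdots z_j\in S_{\mathbf{w}}$, even though that condition formally requires the two indices to be distinct, whereas you isolate the subcase $n'=n$, observe that it would make $q$ a proper border of $u_n$, and kill it with the shortest-border argument ($2|q_0|\leq|u_n|$, so $u_n=q_0wq_0$ and condition~\eqref{equation-def-s-ws-0} gives $q_0\in S_{\mathbf{w}}$, contradicting the maximality of $p_n$). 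This is a worthwhile addition: it closes a step the paper glosses over, at the cost of one extra word-combinatorial lemma.
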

  \begin{proof}
  Let $n \in \N$, and let $u_n = y_1 \ldots y_m$ where $y_1, \ldots, y_m \in
  A$. We will now show that $(y_1\ldots y_{m - 1})\Phi = (y_1 \ldots y_{m -
  1})\Psi$. Since $y_1 = a$ by Claim~\ref{claim-seq--1}, it follows that $(y_1)\Phi = \alpha = (y_1)\Psi$.
  Suppose $(y_1 \ldots y_i) \Phi = (y_1 \ldots y_i) \Psi $ for some $i \in \{1,
  \ldots, m - 2\}$. Then $ (y_1 \ldots y_{i+1}) \Phi  =  (y_1 \ldots y_i) \Psi
  \circ (y_{i + 1})\Phi$. If $y_{i + 1} = a$, then $(y_{i + 1}) \Phi = (y_{i +
  1}) \Psi$, and so the inductive hypothesis is satisfied. Suppose $y_{i + 1} =
  b$. Then $(y_{i + 1}) \Phi = (y_{i + 1}) \Psi \circ \delta$. Hence $(y_1
  \ldots y_{i+1}) \Phi = (y_1 \ldots y_{i + 1}) \Psi \circ \delta$. By
  Claim~\ref{claim-seq-1}, for every $j \in \{1, \ldots, i + 1\}$ the proper
  prefix $y_1 \ldots y_j$ of $u_n$ is not a suffix of any word in $T$. By
  Claim~\ref{claim-seq--1}, $y_1 \ldots y_j \in aA^*$, and so by
  Claim~\ref{claim-seq-3} there exists $j \in \N$ and $z_1, \ldots, z_j \in
  A^+$ such that $(\ldots, x_1, x_0) \left( (y_1 \ldots y_{i + 1})\Psi \right)
  = (\ldots, x_1, x_0, z_1, \ldots, z_j)$ for all $(\ldots, x_1, x_0) \in X$
  and $z_1\ldots z_j = y_1 \ldots y_{i + 1}$.

  Suppose that $z_k \ldots z_j = u_t$ for some $k \in \{1, \ldots, j\}$ and $t
  \in \N$. Then $u_t$ is a subword of $u_n$, and so of $w_n$. Hence $t = n$ by
  the hypothesis of the theorem, and thus $u_n$ is a proper subword of $u_n$,
  which is a contradiction. Suppose that $u_t = x_k \ldots x_0 z_1 \ldots z_j$
  for some $k \geq 0$ and $t \in \N$ such that $x_0, \ldots, x_k \in A^+$. 
  Then $z_1 \ldots z_j$ is a prefix of $u_n$ and a suffix of $u_t$, and so $z_1
  \ldots z_j \in S_{\mathbf{w}}$, since $S_{\mathbf{w}}$ satisfies
  condition~\eqref{equation-def-s-ws-1}. But then $w_n = p_nu_ns_n = (p_nz_1
  \ldots z_j )(y_{i+2} \ldots y_m)s_n$ with $p_n z_1 \ldots z_j \in
  S_{\mathbf{w}}$, and this contradicts the maximality of the length of $p_n$.
  So $\delta$ acts as the identity on $(\ldots, x_1, x_0, z_1, \ldots, z_j)$.
  Hence $(y_1 \ldots y_{i + 1})\Phi = (y_1 \ldots y_{i + 1})\Psi$. By induction
  $(y_1 \ldots y_{m - 1})\Phi = (y_1 \ldots y_{m - 1})\Psi$. Finally,
  $(u_n)\Phi = (u_n)\Psi \circ \delta$, as $y_m = b$.
  \end{proof}

  Let $n \in \N$. It follows from Claim~\ref{claim-seq-4}, Claim~\ref{claim-seq-5},
  Claims~\ref{claim-seq-6}, and the fact that $\Phi$ is a homomorphism, that for
  all $(\ldots, x_1, x_0) \in X$
  \begin{align*}
    (\ldots, x_1, x_0)(w_n) \Phi &= (\ldots, x_1, x_0) \left( (p_n) \Psi \circ (u_n)
      \Psi \circ \delta \circ (s_n)\Psi \right) \\
                     &= (\ldots, x_1, x_0p_n) \left((u_n) \Psi \circ
      \delta \circ (s_n)\Psi \right).
  \end{align*}
  It follows from Claims~\ref{claim-seq--1},~\ref{claim-seq-1}
  and~\ref{claim-seq-3} that there are $z_1, \ldots, z_k \in A^+$ such that
  $z_1 \ldots z_k = u_n$ and
  \begin{align*}
    (\ldots, x_1, x_0)(w_n) \Phi &= (\ldots, x_1, x_0p_n) \left((u_n) \Psi \circ
      \delta \circ (s_n)\Psi \right) \\
                     &= (\ldots, x_1, x_0p_n,z_1, z_2, \ldots, z_k)
      \delta \circ (s_n)\Psi. \\
  \end{align*}
  Finally, by the definition of $\delta$
  \begin{align*}
    (\ldots, x_1, x_0)(w_n) \Phi &= (\ldots, x_1, x_0p_n,z_1, z_2, \ldots, z_k)
      \delta \circ (s_n)\Psi \\
                   &= (\ldots, x_1, x_0)f_n \circ ((s_n)\Psi)^{-1} \circ (s_n) \Psi\\
                   &= (\ldots, x_1, x_0)f_n.
  \end{align*}
  Therefore $(w_n)\Phi = f_n$, and since $n$ was arbitrary, $(w_1, w_2, \ldots)$
  is a universal sequence.
\end{proof}

\subsection*{Acknowledgements}
  The authors would like to thank Manfred Droste for pointing out that the
  condition that $|X|$ is a regular cardinal was not required in
  Corollaries~\ref{cor-universal-cardinlity-sym}. The authors also 
  thank the anonymous referee for their helpful comments and corrections.


\bibliography{universal}{}
\bibliographystyle{plain}

\end{document}